\documentclass[a4paper,10pt,reqno, english]{amsart}

%% Language and font encodings
\usepackage[english]{babel}
\usepackage[T1]{fontenc}

\setlength{\parskip}{1ex plus 0.5 ex minus 0.2ex}

%% Sets page size and margins
\usepackage[a4paper,top=3cm,bottom=2cm,left=3cm,right=3cm,marginparwidth=1.75cm]{geometry}
\usepackage{graphicx}
\graphicspath{{.}}

% Drawing
\usepackage{tikz}
\usetikzlibrary{calc, through, intersections}

\usepackage[utf8]{inputenc}
\usepackage{listings}
\usepackage{xcolor}
\usepackage{fancyhdr}
\lstset{
  basicstyle=\small\ttfamily,
  columns=fullflexible,
  frame=single,
  breaklines=true,
  postbreak=\mbox{\textcolor{red}{$\hookrightarrow$}\space},
}

\lstdefinestyle{Python}{
    language         = Python,
    numbers          = left,
    showspaces       = false,
    showstringspaces = false,
    basicstyle       = \ttfamily,
    keywordstyle     = \color{blue},
    stringstyle      = \color{orange},
    commentstyle     = \color{red}\ttfamily
}

% Enumerate nicer
\usepackage{enumitem}

%% Useful packages
\usepackage{amsmath}
\usepackage{graphicx}
\usepackage{amsthm}
\usepackage{amssymb}
\usepackage[colorinlistoftodos]{todonotes}
\usepackage[colorlinks=true, allcolors=blue]{hyperref}

% Use these for theorems, lemmas, proofs, etc.
\newtheorem{theorem}{Theorem}[section]
\newtheorem{lemma}[theorem]{Lemma}
\newtheorem{proposition}[theorem]{Proposition}

\newtheorem{corollary}[theorem]{Corollary}
\newtheorem{definition}[theorem]{Definition}

\newcommand{\Fq}{\mathbb{F}_q}
\newcommand{\PFq}{\mathbb{P}^2(\Fq)}
\newcommand{\D}{\mathcal{D}}

\title{Counting 7-Arcs in Projective Planes over Finite Fields}
\author{Andrei Staicu}
\date{}

\begin{document}

\maketitle

\begin{abstract}
    
Given a collection of points in the plane, classifying which subsets are collinear is a natural problem and is related to classical geometric constructions. We consider collections of points in a projective plane over a finite field such that no three are collinear. This is a finite set and its size is both combinatorially interesting and has deeper topological consequences. We count the number of such collections classified by the algebraic symmetries of the finite field. Variations of this problem have been considered by Glynn, Bergvall, Das, O'Connor et al. We obtain the counts for 7 points over fields of characteristic 2. These new counts are governed by the existence and classification of a configuration of points called the Fano plane.
    
\end{abstract}

\section{Introduction}

Denote the finite field of $q$ elements by $\Fq$. When considering the projective plane over $\Fq$, we can count the size of the set of $n$-tuples of points in $\PFq$ such that no three points are collinear. This count has been computed for $n \le 8$ by Glynn \cite{glynn1988rings}, and $n = 9$ by Iampolskaia, Skorobogatov, and Sorokin \cite{iampolskaia1995formula}. It is known that the size of this set is given by a polynomial in $q$ for $n \le 6$ and is \textit{quasipolynomial} for $7 \le n \le 9$, see Definition \ref{QDef}.

In this paper, we will consider an extension of this problem. Let $F_q$ be the automorphism of $\Fq$ defined by $x \mapsto x^q$, we will denote this by $F$ when $q$ is understood.

\begin{definition}
    An n-arc defined over $\Fq$ is a set $S$ of size $n$ of points in $\mathbb{P}^2(\overline{\Fq})$ such that no three points of $S$ are collinear and $S$ is $F_q$-invariant, i.e. for all $a \in S$ we have that $F_q(a) \in S$.
\end{definition}

Let $B_n$ be the collection of unordered $n$-arcs over $\Fq$. Notice that $F_q$ acts on each element of $B_n$ by permuting its coordinates. For each conjugacy class of $S_n$, which we index by its cycle type, the partition $\lambda$ of $n$, let $B_n^\lambda \subset B_n$ be the set of $n$-arcs $S$ in $B_n$ such that the action of $F_q$ on $S$ gives the cycle type $\lambda$. Note that for any labeling of $S \in B_n$, the action of $F_q$ on $S$ gives the same cycle type, thus $B_n^\lambda$ is a well defined subset of $B_n$.

These counts have been computed for $n \le 6$ by Das and O'Connor \cite{das2021configurations}, and for $n = 7$ in fields of characteristic $p > 2$ by Bergvall \cite{bergvall2020cohomology}. These counts were used to deduce topological consequences. The counts computed in this paper can be used the same way, but we leave this to the expert reader. The main result of this paper (see section \ref{S4}) is to extend the counts for $n = 7$ to include characteristic $2$.

\begin{theorem} \label{1.21}
    The counts for $7$-arcs that are different for characteristic $p = 2$ are listed below:
\begin{table}[h]
\centering
\begin{tabular}{ l | l }
Cycle Type $\lambda$ & $|B_7^\lambda|/|\text{PGL}(3, \Fq)|$\\
\hline
 $e$ & $\frac{1}{5040}(q^6 - 28q^5 + 323q^4 - 1952q^3 + 6462q^2 - 11004q + 7440)$\\ 
 $(12)(34)$ & $\frac{1}{48}(q^6 - 4q^5 - q^4 + 16q^3 - 6q^2 - 12q)$\\ 
 $(123)(456)$ & $\frac{1}{18}(q^6 - q^5 - q^4 - 8q^3 + 9q^2 + 6q + 12)$\\
 $(1234)(56)$ & $\frac{1}{8}(q^6 - 3q^4)$\\ 
 $(1234567)$ & $\frac{1}{7}(q^6 + q^4 + q^3 + q^2 -1)$
\end{tabular}
\caption{$7$-Arc counts for characteristic $2$ fields}
\end{table}
\end{theorem}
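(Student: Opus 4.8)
The plan is to compute each count $|B_7^\lambda|$ by fixing the cycle type $\lambda$ of the Frobenius action, parametrizing the $7$-arcs with that symmetry, and then dividing by the size of $\mathrm{PGL}(3,\Fq)$ acting on the configurations (accounting for stabilizers, which for $n=7$ arcs are generically trivial since $7$ points in general position have no nontrivial projective automorphisms once enough are pinned down). For a given $\lambda = (\lambda_1,\dots,\lambda_k)$, an $F$-invariant $7$-arc decomposes into Frobenius orbits of sizes $\lambda_i$; an orbit of size $d$ corresponds to a closed point of degree $d$, i.e. a $\mathrm{Gal}(\mathbb{F}_{q^d}/\Fq)$-conjugate $d$-tuple of points of $\mathbb{P}^2(\mathbb{F}_{q^d})$ not defined over any proper subfield. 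So for each $\lambda$ I would count ordered tuples of points (one point per orbit, chosen over the appropriate extension field) subject to the no-three-collinear condition, using inclusion–exclusion over the collinearity constraints, then symmetrize.

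The key steps, in order: (1) Set up the stratification of the variety $\mathcal{M}_{0,7}$-analogue of ordered $7$-arcs, or rather count ordered configurations $U_7(\Fq) = \{(P_1,\dots,P_7) : \text{no three collinear}\}$ refined by which points are Frobenius-conjugate; Das--O'Connor and Bergvall already give the polynomial/quasipolynomial master count, so the task is the characteristic-$2$ correction. (2) Identify precisely where characteristic $2$ differs: the obstruction is that $7$ points in general position in $\mathbb{P}^2$ can form a \emph{Fano subplane} (all $\binom{7}{3}=35$ triples... rather, the $7$ lines through pairs passing through the remaining points in the Fano incidence pattern), and a Fano configuration of $7$ points with \emph{no} three collinear exists in $\mathbb{P}^2(\overline{\Fq})$ precisely when $\mathrm{char} = 2$. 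Counting $7$-arcs requires subtracting (or correctly handling) those $7$-tuples that accidentally become Fano planes, which only happens in characteristic $2$; hence the counts change only for those $\lambda$ realizable by a Frobenius action on a Fano plane or its complement within the count. (3) For each of the five listed cycle types, count the degree-$d$ closed points and the collinearity incidences over $\mathbb{F}_{q^d}$, feeding in the Fano correction: e.g. for $\lambda = (1234567)$, a Frobenius $7$-cycle means the arc is a single closed point of degree $7$, and one counts degree-$7$ points of $\mathbb{P}^2$ lying on no line defined over $\Fq$ through two of its conjugates, then corrects for the Fano case where the $7$ conjugate points form a Fano plane (which is $\mathrm{PGL}(3,\mathbb{F}_2) = \mathrm{PSL}(2,7)$-symmetric, explaining the shape $\tfrac{1}{7}(q^6+q^4+q^3+q^2-1)$). (4) Assemble the five polynomials and verify consistency: summing $|B_7^\lambda|$ weighted appropriately against the known total $|B_7|$ from Glynn must reproduce the quasipolynomial, and specializing must agree with Bergvall away from $p=2$ for the cycle types not in the table.

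The main obstacle I expect is step (2)–(3): correctly enumerating the Fano-plane configurations and their Frobenius-module structure. One must classify the $\mathrm{Gal}$-action on a Fano plane embedded in $\mathbb{P}^2(\overline{\Fq})$ up to $\mathrm{PGL}(3,\Fq)$-equivalence — equivalently, classify $\Fq$-forms / embeddings of the Fano plane, which is governed by homomorphisms $\mathrm{Gal}(\overline{\Fq}/\Fq) \to \mathrm{PGL}(3,\mathbb{F}_2) \hookrightarrow \mathrm{PGL}(3,\overline{\Fq})$ (i.e. conjugacy classes of cyclic subgroups of $\mathrm{PGL}(3,2)$, generated by the image of Frobenius), and then read off which cycle type $\lambda$ each produces on the $7$ points. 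The delicate bookkeeping is ensuring no double-counting between "the $7$-arc is a Fano plane" and "the $7$-arc contains no Fano sub-configuration," and getting the stabilizer/orbit-counting factors exactly right so that the final division by $|\mathrm{PGL}(3,\Fq)|$ yields the clean polynomials in the table. A secondary check I would build in is a brute-force computer enumeration over small fields $\mathbb{F}_2, \mathbb{F}_4, \mathbb{F}_8, \mathbb{F}_{16}$ (consistent with the \texttt{listings}/Python setup in the preamble) to validate each of the five polynomials at several points.
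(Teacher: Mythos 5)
Your high-level strategy matches the paper's: stratify by Frobenius cycle type, count degenerate configurations by inclusion--exclusion over collinearity conditions, and locate the characteristic-$2$ correction in the existence of Fano planes. Your idea of classifying the possible Frobenius actions on an embedded Fano plane via conjugacy classes of cyclic subgroups of $\mathrm{PGL}(3,\mathbb{F}_2)$ is genuinely attractive --- the element orders $1,2,3,4,7$ act on the seven points with cycle types $e$, $(12)(34)$, $(123)(456)$, $(1234)(56)$, $(1234567)$, exactly the five rows of the table --- and would give a cleaner route to the paper's Theorem \ref{3.2T} than its case-by-case elimination of the other cycle types. However, there is a conceptual error in your step (2): a Fano configuration is never a $7$-arc (each of its seven lines contains three of the points), so one does not ``subtract $7$-tuples that accidentally become Fano planes'' from the arc count. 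The actual mechanism is that the Fano planes are the deepest strata of the inclusion--exclusion over degeneracy conditions: in the paper's notation, certain triple intersections $\Delta_i \cap \Delta_j \cap \Delta_k$ of collinearity loci, which are empty in odd characteristic, are nonempty in characteristic $2$ and consist precisely of the $F$-invariant Fano planes of the given cycle type. Getting the five polynomials therefore requires knowing the exact number of such Fano planes per $\mathrm{PGL}(3,\Fq)$-orbit for each $\lambda$, not merely that they exist.

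That count is where the substance lies and where your proposal is silent. The paper obtains it for $(1234)(56)$, $(12)(34)$, and $(123)(456)$ by exhibiting bijections between Fano planes of those types and $4$-arcs of types $(1234)$, $(12)(34)$, $(123)$ respectively (Propositions \ref{P310}--\ref{P312}), after which the inclusion--exclusion --- spelling out each $\Delta_i$, all pairwise intersections, and the Fano-plane triple intersections --- produces the polynomials; none of this bookkeeping appears in your plan. For $\lambda=(1234567)$ your remark that the configuration is ``$\mathrm{PGL}(3,\mathbb{F}_2)$-symmetric'' does not by itself yield the count: no bijection with $4$-arcs is available, and the paper instead proves a collinearity criterion (Proposition \ref{P313}), argues by a growth comparison with the $30$ Fano planes of trivial type that the normalized count is a constant, and computes that constant to be $2$ over $\mathbb{F}_2$ by machine. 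Your proposed descent classification of $\Fq$-forms of the Fano plane could in principle replace all of these ad hoc counts (via a Lang--Steinberg style orbit count over each conjugacy class of $\mathrm{PGL}(3,\mathbb{F}_2)$), and that would be a genuinely different and arguably more uniform argument --- but as written it is only a proposal, and the verification that each normalized Fano count feeds into the inclusion--exclusion to give exactly the stated coefficients is missing.
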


\section{Preliminaries}

Let $\mathbb{P}^{2\vee}(\Fq)$ denote the collection of lines in $\PFq$. Note that for $(a : b : c) = p \in \PFq$, we have $F_q(a : b : c) = (F_qa : F_qb : F_qc)$ and for $\{ax + by + cz\} = \ell \in \mathbb{P}^{2\vee}(\Fq)$, $F_q\{ax + by + cz = 0\} = \{ (F_qa)x + (F_qb)y + (F_qc)z = 0\}$. For distinct $p, p' \in \PFq$, let $\langle p, p' \rangle$ be the line $\ell$ such that $\{p, p' \} \subset \ell$. Similarly for distinct $\ell, \ell' \in \mathbb{P}^{2\vee}$ define $\langle \ell, \ell' \rangle$ as the point $p \in \PFq$ such that $\{ p \} = \ell \cap \ell'$. One can check that $\langle p, p' \rangle$ and $\langle \ell, \ell' \rangle$ are indeed unique.

\begin{definition}
    A $q^n$-point is a point $p \in \mathbb{P}^2(\overline{\Fq})$ such that $p \in \mathbb{P}^2(\mathbb{F}_{q^n})$ for $n$ minimal. A $q^n$-line is a line $\ell \in \mathbb{P}^{2\vee}(\overline{\Fq})$ such that $p \in \mathbb{P}^{2\vee}(\mathbb{F}_{q^n})$ for $n$ minimal
\end{definition}

We denote $D_q^n$ to be the set of all $q^n$-points. Notice that $D^1_q = \PFq$, from Galois theory of $\Fq$ we have the following recursive definition for $D^n_q$:
\begin{align}
    D_q^n = \mathbb{P}^2(\mathbb{F}_{q^n}) \setminus \bigcup_{\substack{d \mid n; d \neq n}} D_q^d
\end{align}
Since we know that $|D_q^1| = |\PFq| = q^2 + q + 1$, we can compute the size of $D_q^n$ using the following formula:
\begin{align}
    |D_q^n| = \left|\mathbb{F}_{q^n}\right| - \left|\bigcup_{d \mid n; d \neq n} D_q^d\right|
\end{align}

For each cycle type $\lambda$ of $S_n$, we will define $C_n^\lambda$ as the collection of unordered $n$-tuples of points in $\mathbb{P}^2(\overline{\mathbb{F}_q})$ that are $F_q$-invariant where the action of $F_q$ on every tuple gives the cycle type $\lambda$. It is useful to characterize $C_n^\lambda$ since $B_n^\lambda \subset C_n^\lambda$.

Take some $S \in C_n^\lambda$, since $F_q(S) = S$, then the orbit $O(a)$ of a point $a \in S$ is contained in $S$. Furthermore, if $\lambda$ represents the partition $\{r_1, \ldots, r_\ell \}$ of $n$ then we know that $|O(a)| = r_i$ for some $i$. Let $\sim$ be the equivalence relation where for $a, a' \in D_q^n$, $a \sim a'$ if $a' \in O(a)$. It can be checked this is indeed an equivalence relation, now let $\mathcal{D}_q^n = D_q^n / \sim$.

We can write the partition $\lambda$ as $\{(m_1, s_1), \ldots, (m_1, s_k)\}$ where $m_i$ is the multiplicity of $s_i \in \lambda$ and all $s_i$ are distinct. For every tuple $p \in \text{Conf}_{m_1}(\D_q^{s_1}) \times \cdots \times \text{Conf}_{m_k}(\D_q^{s_k})$, the set defined by $S = \cup_{i}\cup_j^{m_i} O(p_{i,j})$ is a $n$-tuple that is $F_q$-invariant by construction and gives the cycle type $\lambda$. Notice that this map is onto, but not injective since each configuration space is a set of ordered points and $C_7^\lambda$ is not. However, by a simple counting argument, we can compute the size of each pre-image of $S \in C_7^\lambda$ under this map to conclude:
\begin{align}\label{Conf1}
    \left| \text{Conf}_{m_1}(\D_q^{s_1}) \times \cdots \times \text{Conf}_{m_k}(\D_q^{s_k}) \right| = \left(\prod_{i = 1}^k m_i!\right)|C_n^\lambda|
\end{align}

We will use this fact in our study of $B_n^\lambda$ and $C_n^\lambda$.

The work done by Glynn \cite{glynn1988rings} and Iampolskaia, Skorobogatov, and Sorokin \cite{iampolskaia1995formula} can be viewed as the counts of the trivial cycle type for $n \le 9$, i.e. the count corresponding to the conjugacy class of $e$. Glynn \cite[Theorems 4.2 and 4.4]{glynn1988rings} computes have the following count for $|B_7^e|$:
\begin{multline}
    |B_7^e| = (q^2 + q + 1)(q^2 + q)q^2(q - 1)^2((q - 3)(q - 5)(q^4 - 20q^3 +\\ 148q^2 - 468q + 498) - 30a(q))
\end{multline}
And the following explicit formula for $a(q)$, let $q = p^s$:
\begin{align}
a(q) &= 
\begin{cases}
        0 & \text{if } p = 2\\
        1 & \text{otherwise}\\
\end{cases}
\end{align}

From Glynn, we know that $a(q)$ gives the number of Fano planes. We call a Fano plane a superconfiguration since it are configurations of $7$ points and $7$ lines. It is clear that the count above is not polynomial, but are only off by some periodicity. This motivates our use of the term quasipolynomial, defined below.

\begin{definition}\cite[Definition 1.14]{kaplan2017counting}\label{QDef}
    A quasipolynomial of period $m$ is a function $g(x)$ of positive integers such that there is a collection of polynomials $f_0(x), \ldots, f_{m-1}(x)$ such that $g(x) = f_i(x)$ for all $x \equiv i \mod m$.
\end{definition}

\section{Fano Plane}

First we will precisely define the Fano plane:

\begin{definition}
    A Fano plane is a collection of $7$ points and $7$ lines such that $3$ points are on every line and $3$ lines pass through every point
\end{definition}
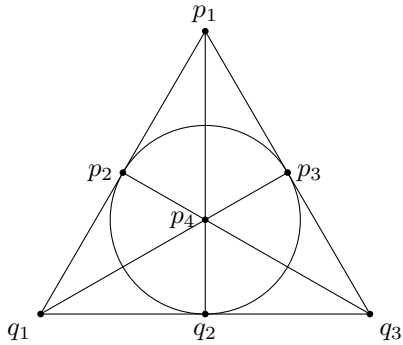
\begin{figure}[h!t]
\centering
\begin{tikzpicture}[scale=2.5]
  % Define the vertices
  \coordinate[label=above:$p_1$] (p1) at (90:1);
  \coordinate[label=below left:$q_1$] (q1) at (210:1);
  \coordinate[label=below right:$q_3$] (q3) at (-30:1);

  % Draw the equilateral triangle
  \draw (p1) -- (q1) -- (q3) -- cycle;

  % Calculate and define the midpoints
  \coordinate[label=left:$p_2$] (p2) at ($(p1)!0.5!(q1)$);
  \coordinate[label=below:$q_2$] (q2) at ($(q1)!0.5!(q3)$);
  \coordinate[label=right:$p_3$] (p3) at ($(q3)!0.5!(p1)$);
  \coordinate[label=left:$p_4$] (p4) at (barycentric cs:p1=1,q1=1,q3=1);

  \draw (p2) -- (q3);
  \draw (p3) -- (q1);
  \draw (p1) -- (q2);

  % Draw the midpoints
  \foreach \point in {p1, p2, p3, q1, q2, q3, p4}
    \fill (\point) circle (0.5pt);

  \node [draw, circle through=(q2), black] at (p4) {};
\end{tikzpicture}
\caption{A diagram of a Fano plane}
\label{fig:FP}
\end{figure}

In going through the proofs for all the counts of $7$-arcs in characteristic $p > 2$ in Bergvall \cite{bergvall2020cohomology}, there is only one claim that does not hold true in characteristic $p = 2$. It states: for any points $p_1, p_2, p_3, p_4 \in \PFq$, the intersections $q_1 = \langle \langle p_1, p_2 \rangle, \langle p_3, p_4 \rangle \rangle$, $q_2 = \langle \langle p_1, p_3 \rangle, \langle p_2, p_4 \rangle \rangle$, and $q_3 = \langle \langle p_1, p_4 \rangle, \langle p_2, p_3 \rangle \rangle$ are not collinear. In fact, we know that every $4$-arc is contained in a unique subplane of order $p$ if $q = p^d$ \cite[Theorem 4.5]{glynn1988rings}. Therefore, in characteristic $2$, $p_1, p_2, p_3, p_4$ are contained in the subplane of order $2$, the Fano plane which contains $7$ points, so $q_1, q_2, q_3$ are always collinear. We will call this the Fano axiom.

\begin{lemma}
    If there is no $S \in C_7^\lambda$ which forms a Fano plane then the count for characteristic $2$ for $|B_7^\lambda|$ is the same as for characteristic $p > 2$.
\end{lemma}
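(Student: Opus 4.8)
The plan is to show that the only obstruction to transferring Bergvall's characteristic-$p>2$ arguments to characteristic $2$ is precisely the Fano axiom failure, and that if no $n$-arc configuration in $C_7^\lambda$ realizes a Fano plane, then that failure is never triggered. First I would recall the structure of Bergvall's computation: for each cycle type $\lambda$, the count $|B_7^\lambda|$ is obtained by starting from $|C_7^\lambda|$ (equivalently, from the configuration-space count in \eqref{Conf1}, which is purely a count of $q^n$-points and orbits under $F_q$ and is characteristic-independent) and then subtracting off the $F_q$-invariant $7$-point sets that fail to be arcs, i.e. those in which some three points are collinear. The collinearity conditions imposed or excluded along the way are all of the form ``these three intersection points are/aren't collinear,'' and Bergvall's proofs use the non-degeneracy lemma (the statement that $q_1,q_2,q_3$ as defined from a $4$-arc are non-collinear) at finitely many places. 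I would isolate each such invocation.

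The key steps, in order, are: (1) State precisely that the base count $|C_7^\lambda|$ and all intermediate counts of $F_q$-orbit patterns are given by formulas in $q$ that do not depend on $\mathrm{char}\,\Fq$, since they only involve $|\D_q^s|$ and linear-algebra incidence counts over $\Fq$ and its extensions, none of which see the characteristic except through the Fano axiom. (2) Go through Bergvall's derivation for the given $\lambda$ and mark every step where the Fano axiom (non-collinearity of $q_1,q_2,q_3$) is used; observe that each such use is to assert that a certain auxiliary configuration cannot collapse, and that the only configurations on which it can collapse in characteristic $2$ are exactly those whose $7$ relevant points form a Fano plane. (3) Invoke the hypothesis: if no $S \in C_7^\lambda$ forms a Fano plane, then none of these degenerate sub-cases occurs, so every inclusion–exclusion step that Bergvall performs yields the same polynomial contribution in characteristic $2$ as in odd characteristic. (4) Conclude that summing these identical contributions gives $|B_7^\lambda|$ equal to the characteristic-$p>2$ formula.

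I expect the main obstacle to be step (2): making the claim ``the Fano axiom is the \emph{only} characteristic-sensitive input'' genuinely rigorous rather than a restatement of the lemma. One must actually audit Bergvall's case analysis for the cycle type in question and verify that no other step silently uses $2 \neq 0$ — for instance, that no division by $2$, no distinctness-of-roots argument for a quadratic, and no count of involutions or fixed points of $F_q$ hides a parity assumption. The cleanest way to handle this is to package the audit as: the full list of geometric lemmas Bergvall uses is finite, each is a statement about incidences of points and lines over $\overline{\Fq}$, and each either (a) holds over any field, or (b) is the Fano axiom; lemmas of type (a) need no hypothesis, and lemmas of type (b), under our hypothesis, are never applied to a configuration where they would fail, because such an application would produce a Fano plane among $7$ of the points, contradicting $S \notin$ (Fano configurations) for $S \in C_7^\lambda$. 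Once this dichotomy is established, the rest is bookkeeping: the same inclusion–exclusion tree with the same leaf values produces the same polynomial, so $|B_7^\lambda|$ agrees with the odd-characteristic count.
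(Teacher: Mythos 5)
Your proposal takes essentially the same approach as the paper: the paper's proof is a one-sentence appeal to the observation (stated just before the lemma) that the non-collinearity of the diagonal points of a complete quadrilateral is the \emph{only} characteristic-sensitive input in Bergvall's arguments, and that its failure in characteristic $2$ forces the seven points in question to form a Fano plane, so the hypothesis that no $S \in C_7^\lambda$ is a Fano plane means no counting step changes. Your write-up is the same argument made more explicit --- your step (2) ``audit'' is in fact more careful than the paper's own justification, which simply asserts the result of that audit without recording it.
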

\begin{proof}
    It follows that if there is no $S \in C_7^\lambda$ which forms a Fano plane, then the same argument for counting the size of $|B_7^\lambda|$ works as given in Bergvall since the assumption that is not true in characteristic $2$ is not used, and thus the count for characteristic $2$ for $|B_7^\lambda|$ stays same. 
\end{proof}

This gives an incentive to classify all cycle types $\lambda$ of $S_7$ where there is some $S \in C_7^\lambda$ that is a Fano plane; we say that $\lambda$ admits a Fano plane construction. First, we classify all the cycle types which do not admit a Fano plane construction.

\begin{theorem}\label{3.2T}
    All cycle types of $S_7$ other than $(1234567)$, $(1234)(56)$, $(123)(456)$, $(12)(34)$, and $e$, do not admit a Fano plane constructions.
\end{theorem}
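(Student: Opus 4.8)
The plan is to go through each of the $15$ cycle types of $S_7$ and show that, apart from the five listed, none can arise as the Frobenius action on a Fano plane. The key structural fact is that a Fano plane $S \in C_7^\lambda$ is $F_q$-invariant, so $F_q$ acts as a \emph{collineation} of this subplane of order $2$: it sends points to points, lines to lines, and preserves incidence. Hence the cycle type $\lambda$ of $F_q$ on the $7$ points must be the cycle type of an element of $\mathrm{PGL}(3,2) \cong \mathrm{PSL}(2,7)$, the full collineation group of the Fano plane, acting on its $7$ points. So the first step is to recall (or derive) the conjugacy classes of $\mathrm{PGL}(3,2)$ and their cycle types on $7$ points: the identity (type $e$), elements of order $2$ (type $(12)(34)$, i.e. two transpositions and three fixed points), order $3$ (type $(123)(456)$, two $3$-cycles and a fixed point), order $4$ (type $(1234)(56)$), and order $7$ (type $(1234567)$), with two classes of $7$-cycles. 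This already shows the five listed cycle types are the \emph{only} candidates, because any $\lambda$ not on this list — namely $(1^3)(2^2)$ wait that is $(12)(34)$; the genuinely excluded ones are $(2)$, $(3)$, $(3)(2^2)$, $(4)(3)$, $(5)$, $(5)(2)$, $(6)$, $(7)$ with the wrong fixed-point count, $(2^3)$, $(4)(2^2)$... — cannot be realized by any permutation that preserves the incidence structure of the Fano plane.

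**Carrying it out.**

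Concretely, for each excluded cycle type $\lambda$ I would argue directly that an $F_q$-invariant set of $7$ points with Frobenius orbit structure $\lambda$ cannot satisfy the Fano incidence axioms. The cleanest uniform argument: if $S$ is a Fano plane, the induced permutation $\sigma = F_q|_S$ must preserve, for each pair of points, the third point on their common line; equivalently $\sigma$ must be an automorphism of the Steiner triple system $S(2,3,7)$. One then simply checks that no permutation of cycle type $\lambda$ (for $\lambda$ excluded) is an automorphism of $S(2,3,7)$ — for instance, a single transposition $(12)$ fixing five points is impossible because the five fixed points would contain a fixed line, but every automorphism fixing a line and acting nontrivially must move at least two further points off that line in a $2$-cycle that pairs with structure, etc.; a $5$-cycle is impossible since $5 \nmid |\mathrm{PGL}(3,2)| = 168$; a $6$-cycle likewise since $6 \nmid 168$ in the relevant action (no element of order $6$); type $(123)(45)$ and friends are killed by the order argument plus the fixed-point count forced by the structure of STS automorphisms. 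Rather than invoking group theory as a black box, I expect the paper wants the elementary incidence-geometric version, so I would spell out a couple of representative cases and indicate that the rest follow by the same ``track where the lines go'' bookkeeping, or alternatively cite the well-known fact that $\mathrm{Aut}(S(2,3,7)) = \mathrm{PGL}(3,2)$ has exactly these cycle types.

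**The main obstacle.**

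The delicate point is not the existence direction — producing explicit Fano planes in $C_7^\lambda$ for the five surviving types is easy by exhibiting an $\mathbb{F}_2$-rational Fano plane and letting $F_q$ act through a chosen element of order $1,2,3,4,7$ (for the $7$-cycle one takes a Fano plane defined over $\mathbb{F}_8$ but not over $\mathbb{F}_2$, with Frobenius acting as a Singer cycle). The real obstacle is making the \emph{nonexistence} argument airtight for the borderline cycle types that do \emph{not} fail purely on divisibility grounds, principally $(1234567)$ with ``the other'' conjugation-fused structure, and types like $(1234)(56)$ versus $(1^3)(4)$ or $(1)(2)(4)$: one must verify both that the fixed-point count of $F_q$ on the subplane matches an honest collineation and that no \emph{smaller} Frobenius, i.e. no proper subfield issue, sneaks in — a $7$-cycle must genuinely come from a $q^3$-point orbit, so the subplane is defined over $\mathbb{F}_{q^3}$ and the argument needs $3 \mid$ the relevant extension degree consistent with an order-$7$ collineation, which is exactly why characteristic does not obstruct it. I would handle this by combining the collineation-group classification with the orbit-length constraints coming from $\mathcal{D}_q^n$ as set up in the Preliminaries, so that admissibility of $\lambda$ is equivalent to: $\lambda$ is the cycle type of an element of $\mathrm{PGL}(3,2)$ on $7$ points, and every part $s_i$ of $\lambda$ is a degree for which $\mathcal{D}_q^{s_i}$ is nonempty (automatic here). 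This reduces the whole theorem to the finite check above.
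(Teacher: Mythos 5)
Your argument is correct, and it takes a genuinely different and more conceptual route than the paper. The paper proves Theorem \ref{3.2T} by case-by-case elimination: it introduces the map $A \mapsto P_A$ from $4$-arcs to Fano planes, reduces non-admissibility to Proposition \ref{P34}, and then works through the ten excluded cycle types one at a time using orbit-wise collinearity lemmas (Lemmas \ref{L36}, \ref{L37}, \ref{L39} and the subsections on $(123)(45)$, $(123)$, $(12)$). Your observation --- that $F_q$ preserves collinearity, hence restricts on any $F_q$-invariant Fano plane to an automorphism of the Steiner triple system $S(2,3,7)$ (the collinear triples of a Fano plane are exactly its $7$ lines, since all $21$ pairs are already covered), so $\lambda$ must be the cycle type of an element of $\mathrm{Aut}(S(2,3,7)) \cong \mathrm{PGL}(3,\mathbb{F}_2)$ acting on the $7$ points, and those cycle types are precisely $1^7$, $2^2 1^3$, $3^2 1$, $4\cdot 2\cdot 1$, and $7$ --- disposes of all ten excluded types in one stroke. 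Note that you genuinely need the cycle-type data and not merely the element orders $\{1,2,3,4,7\}$: e.g.\ $(1234)$ and $(123)$ have admissible orders but the wrong fixed-point counts; you acknowledge this, and the verification (involutions are transvections fixing a line pointwise, order-$3$ elements have a one-dimensional fixed space, etc.) is a short standard computation. What your route does not produce is the structural information the paper extracts along the way --- the collinearity dichotomies for Frobenius orbits and the identification of which $4$-arc generates which Fano plane --- which feed directly into Propositions \ref{P310}--\ref{P312} and the counting in Section \ref{S4}, so the paper's longer proof is not wasted effort in context. One small slip in your final paragraph: a $7$-cycle of Frobenius arises from a $q^7$-point orbit, not a $q^3$-point orbit; but that remark concerns the existence direction, which the theorem as stated does not require.
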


Initially it seems that the study of the Fano plane would require a lot of casework. However, there are only $|\text{PGL}(3, \mathbb{F}_2)| = 7$ different $4$-arcs in any instance of the Fano plane, and it can be checked that given some Fano plane $P \subset \mathbb{P}^2(\mathbb{F}_q)$, for all $4$-arc subsets $\{p_1, \ldots, p_4\} = A \subset P$ we have that $A \cup \{ \langle \langle p_1, p_2 \rangle, \langle p_3, p_4 \rangle \rangle, \langle \langle p_1, p_3 \rangle, \langle p_2, p_4 \rangle \rangle, \langle \langle p_1, p_4 \rangle, \langle p_2, p_3 \rangle \rangle\} = P$. Since this set does not depend on how $A$ is labeled, it defines a map from $4$-arcs to Fano planes. We call this the set the Fano plane generated by $A$ and denote it by $P_A$. Using this observation, we can characterize the non-existence of Fano plane constructions as follows.
\begin{proposition}\label{P34}
    The cycle type $\lambda$ does not admit a Fano plane if for all $S \in C_7^\lambda$, either $S$ does not have a $4$-arc subset, or for some $4$-arc subset $A \subset S$ then $P_A \neq S$.
\end{proposition}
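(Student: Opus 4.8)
The plan is to show the two directions: that the stated condition on $4$-arc subsets is equivalent to $\lambda$ not admitting a Fano plane construction. Recall that $\lambda$ admits a Fano plane construction means there exists $S \in C_7^\lambda$ that is a Fano plane. So I want to prove: there is no $S \in C_7^\lambda$ forming a Fano plane if and only if, for every $S \in C_7^\lambda$, either $S$ contains no $4$-arc subset, or else $P_A \neq S$ for some $4$-arc subset $A \subset S$.

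First I would establish the key structural fact already foreshadowed in the paragraph before the statement: if $P \subset \mathbb{P}^2(\overline{\Fq})$ is a Fano plane and $A \subset P$ is any $4$-arc subset (four points, no three collinear), then $P_A = P$. This is the ``it can be checked'' claim; I would verify it by the $\text{PGL}(3,\Fq)$-homogeneity of the Fano configuration — any two $4$-arcs inside a fixed Fano plane are related by a collineation fixing that plane (since $|\text{PGL}(3,\mathbb{F}_2)| = 7$ acts transitively on the $7$ complementary points, hence on the $7$ four-arcs) — so it suffices to check $P_A = P$ for a single explicitly chosen $4$-arc $A$, say using the coordinates from Figure \ref{fig:FP}, and then note that $A \cup \{q_1, q_2, q_3\}$ recovers all $7$ points. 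I should also observe that a $7$-point Fano plane is itself automatically a $7$-arc in the configuration-space sense only after we know no three of its points are collinear, which holds by definition since every line of the Fano plane meets it in exactly $3$ points and there are only $7$ lines total — wait, that is the opposite; I need that no \emph{other} line contains $3$ of the points, which follows because the $7$ lines of the Fano plane already account for all collinear triples among its points.

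With that fact in hand, the two implications are short. For the contrapositive of one direction: suppose $\lambda$ \emph{does} admit a Fano plane construction, so there is $S \in C_7^\lambda$ that is a Fano plane. Then $S$ certainly has a $4$-arc subset $A$ (any Fano plane contains $4$-arcs), and by the structural fact $P_A = S$ for \emph{every} such $A$; hence the stated condition fails for this $S$. Conversely, suppose the stated condition fails, i.e.\ there exists $S \in C_7^\lambda$ possessing a $4$-arc subset $A$ with $P_A = S$. Since $P_A$ is by construction a Fano plane, $S = P_A$ is a Fano plane lying in $C_7^\lambda$, so $\lambda$ admits a Fano plane construction. Combining, the condition in the proposition is exactly the negation of ``admits a Fano plane,'' which is what we wanted.

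The main obstacle is the structural lemma $P_A = P$ for all $4$-arcs $A$ in a Fano plane $P$: one must be careful that $P_A$ is well-defined (independent of the labeling of $A$), which is already noted in the text, and that the three ``diagonal'' points $q_1, q_2, q_3$ together with $A$ genuinely give $7$ distinct points lying in $P$ and no others. I expect this to reduce to a finite check over $\mathbb{F}_2$ — concretely, fix $A = \{(1:0:0), (0:1:0), (0:0:1), (1:1:1)\}$, compute $q_1 = (1:1:0)$, $q_2 = (1:0:1)$, $q_3 = (0:1:1)$, verify these are distinct from each other and from $A$, and observe that the resulting $7$ points are precisely $\mathbb{P}^2(\mathbb{F}_2)$, the unique Fano subplane through $A$ (existence and uniqueness by \cite[Theorem 4.5]{glynn1988rings}). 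Transitivity of the stabilizer of $P$ on its $4$-arcs then upgrades this single computation to all $4$-arcs, and everything else is bookkeeping.
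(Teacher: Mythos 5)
Your argument is correct and matches the paper's intended (unwritten) proof: the proposition is stated without proof as an immediate consequence of the observation that $P_A = P$ for every $4$-arc $A$ contained in a Fano plane $P$, which you verify by one explicit computation over $\mathbb{F}_2$ upgraded by transitivity and equivariance of $A \mapsto P_A$, and then apply via the contrapositive (your converse direction is extra but harmless). One minor slip, inherited from the paper's own text: $|\mathrm{PGL}(3,\mathbb{F}_2)| = 168$, not $7$; the number of $4$-arcs in a Fano plane is $7$ because they are exactly the complements of its $7$ lines (equivalently $168/|S_4| = 7$).
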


\begin{lemma}
For cycle types $\lambda = (123456), (12345)(67), (12345)$ there does not exist $S \in C_7^\lambda$ such that $S$ gives a Fano plane.
\end{lemma}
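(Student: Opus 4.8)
The plan is to exploit the rigidity of the Fano plane: a Frobenius-invariant Fano plane $S$ carries an induced Frobenius \emph{collineation}, and the collineation group of the Fano plane, $\text{PGL}(3,\mathbb{F}_2)\cong\text{GL}(3,\mathbb{F}_2)$, contains no element inducing any of the cycle types $(123456)$, $(12345)(67)$, $(12345)$.

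Concretely, suppose $S\in C_7^\lambda$ is a Fano plane for one of these $\lambda$. The seven ``secant'' lines of $S$ — the lines of $\mathbb{P}^2(\overline{\Fq})$ meeting $S$ in three points — are exactly the lines of this Fano plane. Since $F_q$ is a field automorphism it carries lines of $\mathbb{P}^2(\overline{\Fq})$ to lines and preserves incidence, and since $F_q(S)=S$ it permutes the seven points of $S$ and, compatibly, the seven secant lines; thus $F_q|_S$ is an automorphism of the incidence structure $S$. Fixing an isomorphism from $S$ to $\mathbb{P}^2(\mathbb{F}_2)$ (the abstract Fano plane is unique) and using that a Fano plane has no nontrivial field automorphisms ($\mathrm{Aut}(\mathbb{F}_2)=1$), this automorphism is conjugate to the action of some $\pi\in\text{PGL}(3,\mathbb{F}_2)=\text{GL}(3,\mathbb{F}_2)$ on the seven points of $\mathbb{P}^2(\mathbb{F}_2)$. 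In particular $\pi$, acting on $\mathbb{P}^2(\mathbb{F}_2)$, has cycle type $\lambda$; since this action is faithful (its kernel is the scalar matrices, which is trivial over $\mathbb{F}_2$), $\pi$ has the same order as the permutation $\lambda$.

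It remains to observe that $\text{GL}(3,\mathbb{F}_2)$ has no element of the required order. The permutations $(12345)$, $(12345)(67)$, $(123456)$ have orders $5$, $10$, $6$; but $|\text{GL}(3,\mathbb{F}_2)|=168=2^3\cdot3\cdot7$ is not divisible by $5$, so there is no element of order $5$ or $10$, handling $\lambda=(12345)$ and $\lambda=(12345)(67)$. For $\lambda=(123456)$ one rules out an element of order $6$: such an element would, after extension of scalars, have a Jordan block of size at least $2$ attached to a primitive cube root of unity $\omega$, but Galois conjugacy would then force an equal block for $\omega^2$, using up at least $4$ dimensions — impossible in dimension $3$. (Equivalently, $\text{GL}(3,\mathbb{F}_2)\cong\text{PSL}(2,7)$ has element orders exactly $1,2,3,4,7$.) Hence no such $\pi$, and therefore no such $S$, exists.

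I expect the only subtle step to be the passage from ``$F_q$ stabilizes the point set of $S$'' to ``$F_q$ induces an automorphism of the Fano plane $S$'': this is exactly where one uses that $F_q$ preserves collinearity in $\mathbb{P}^2(\overline{\Fq})$, so that the $F_q$-images of the secant lines of $S$ are again secant lines. Everything else is the (small) group theory of $\text{GL}(3,\mathbb{F}_2)$.
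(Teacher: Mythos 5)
Your proof is correct, and it takes a genuinely different route from the paper's. The paper argues geometrically, case by case: for $\lambda=(123456)$ it decomposes $S=O(a)\cup\{b\}$ with $a\in D^6$ and classifies the possible configurations of $O(a)$ (a $6$-arc, six points on two conjugate $q^2$-lines, or six points on a $q$-line), showing in each case that no $4$-arc subset $A$ can satisfy $P_A=S$; for the two cycle types containing a $5$-cycle it observes that the orbit of a $q^5$-point is either five points on a line or a $5$-arc, and a Fano plane contains neither. Your argument instead passes to the automorphism group of the incidence structure: since $F_q$ preserves collinearity and stabilizes $S$, it permutes the seven trisecant lines (which are exactly the seven lines of the Fano plane, since those already account for all $\binom{7}{2}$ pairs of points) and hence induces a collineation, i.e.\ an element of $\text{Aut}(\text{PG}(2,2))\cong\text{GL}(3,\mathbb{F}_2)$ of order $168=2^3\cdot3\cdot7$, acting faithfully on the seven points; as this group has no element of order $5$, $6$, or $10$, none of the three cycle types can occur. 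This is shorter and more uniform --- it simultaneously rules out every cycle type of order outside $\{1,2,3,4,7\}$, hence also $(1234)(567)$, $(123)(45)(67)$, and $(123)(45)$, and if one compares actual cycle types rather than just orders (involutions in $\text{GL}(3,\mathbb{F}_2)$ act as $1^32^2$, order-$3$ elements as $1\cdot3^2$, order-$4$ elements as $1\cdot2\cdot4$, order-$7$ elements as a $7$-cycle) it yields all of Theorem \ref{3.2T} at once. What the paper's longer route buys is that its geometric lemmas about orbits lying on lines or forming arcs are reused in the counting arguments of Section \ref{S4}, so they are not wasted effort there.

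One small imprecision: in ruling out an element of order $6$ you assert that the size-$\ge 2$ Jordan block must sit at a primitive cube root of unity $\omega$. A priori it could sit at the eigenvalue $1$ instead; but then, since order $6$ forces $\omega$ (and by Galois conjugacy $\omega^2$) to appear as well, one again needs at least $2+1+1=4$ dimensions, so that case dies just as quickly. Your parenthetical appeal to the element orders of $\text{PSL}(2,7)$ being $1,2,3,4,7$ covers this in any event.
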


\begin{proof}
For $\lambda = (123456)$, $O(a) \cup \{b\} = S \in C_7^\lambda$ for some $a \in D^6$ and $b \in D^1$. Either, $O(a)$ forms a $6$-arc, $O(a)$ lies on two conjugate $q^2$ lines which do not intersect in $O(a)$, or $O(a)$ lies entirely on a $q$-line. In the first two cases, let $A$ be any $4$-arc subset $A \subset O(a) \subset S$. In the first case, clearly $P_A \neq S$ since $S$ contains a $4$-arc. In the second case, $P_A \neq S$ since $O(a) \subset S$ lies on two lines but they do not intersect in $O(a)$. In the last case there does not exist a $4$-arc subset of $S$.

For cycle types including a $q^5$ point $a$, we know $O(a)$ is either on a $q$-line or forms a $5$-arc. In the first case, if there is a $4$-arc subset $A \subset S$, $P_A$ doesn't contain a line with $5$ points on it, so $P_A \neq S$. In the second case, for any $4$-arc subset $A \subset O(a)$, $P_A$ doesn't contain a $5$-arc so $P_A \neq S$.

By Proposition \ref{P34}, we can rule out the existence of a Fano plane in the cycle types listed.
\end{proof}

As in the proof of this Lemma, if we can argue that a set of points either has to lie on a line or form a $4$-arc, then Proposition \ref{P34} allows us to rule out some cycle types immediately.

\begin{lemma}\label{L36}
    Take some $a \in D_4$, $O(a) \subset P$ for some Fano plane $P$ if and only if $O(a)$ forms a $4$-arc furthermore $P \in C_7^\lambda$ where $\lambda = (1234)(56)$.
\end{lemma}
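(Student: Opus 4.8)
The plan is to prove both directions of the equivalence, then pin down the cycle type. First suppose $O(a) \subset P$ for some Fano plane $P$, with $a \in D_4$, so $|O(a)| = 4$. Since $P$ contains no three collinear points by the definition of a Fano plane, the four distinct points of $O(a)$ are in particular no-three-collinear, hence form a $4$-arc. Conversely, suppose $O(a)$ forms a $4$-arc. Write $O(a) = \{p_1, p_2, p_3, p_4\}$; these are points of $\mathbb{P}^2(\mathbb{F}_{q^4})$, and since we are in characteristic $2$, the Fano axiom applies to this $4$-arc inside $\mathbb{P}^2(\mathbb{F}_{q^4})$: the three diagonal points $q_1, q_2, q_3$ (defined as in the discussion preceding Theorem \ref{3.2T}) are collinear, and $P_A = O(a) \cup \{q_1, q_2, q_3\}$ is a Fano plane containing $O(a)$, where $A = O(a)$. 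This gives the ``if and only if.''

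Next I would verify the cycle-type claim: that the Fano plane $P = P_A$ generated by $O(a)$ lies in $C_7^{(1234)(56)}$. The key point is that $P_A$ is intrinsically defined from the $4$-arc $A$ by the line-meet operations $\langle\,\cdot\,,\cdot\,\rangle$, which commute with the Frobenius automorphism $F = F_q$ (as recorded in the Preliminaries: $F$ acts on points and lines coordinatewise). Hence $F(P_A) = P_{F(A)}$, and since $F(O(a)) = O(a)$ we get $F(P) = P$, so $P$ is $F$-invariant and $P \in C_7^\mu$ for some partition $\mu$ of $7$. Moreover $F$ restricted to $O(a)$ is a $4$-cycle (that is what $a \in D_4$ means), so $\mu$ has a part equal to $4$; the only partitions of $7$ with a part $4$ are $(4,3)$, $(4,2,1)$, and $(4,1,1,1)$. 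To rule out $(4,3)$ and $(4,1,1,1)$ I would examine how $F$ permutes the three diagonal points $q_1, q_2, q_3$. Because conjugating the unordered $4$-arc $\{p_1,\dots,p_4\}$ by the $4$-cycle $F$ acts on the three unordered pairings $\{\{p_1p_2, p_3p_4\}, \{p_1p_3,p_2p_4\}, \{p_1p_4,p_2p_3\}\}$ — equivalently on $\{q_1,q_2,q_3\}$ — as an element of order dividing $2$ (the $4$-cycle squares to the identity on the set of these three pairings, and is not itself the identity since it swaps two of the pairings), $F$ fixes exactly one $q_i$ and transposes the other two. Thus on all $7$ points $F$ has cycle type $(4,2,1)$, i.e. $P \in C_7^{(1234)(56)}$.

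The step I expect to be the main obstacle is the last one: carefully justifying that the induced action of the $4$-cycle on the three diagonal points is a single transposition, and in particular that it is neither trivial nor a $3$-cycle. One clean way is to note that $S_4$ acts on the $4$-arc's labels, the three pairings correspond to the three nontrivial elements of the normal Klein four-subgroup $V \trianglelefteq S_4$ (each pairing $\leftrightarrow$ the double transposition fixing it), and the conjugation action of $S_4$ on $V \setminus \{e\}$ factors through $S_4/V \cong S_3$; a $4$-cycle maps to a transposition in $S_3$, so it transposes two of the pairings and fixes the third. This also shows that the fixed diagonal point $q_i$ (together with the $F$-fixed structure) is well-defined, and that no diagonal point can lie in $O(a)$ or be fixed in a way producing extra short cycles — giving exactly the cycle type $(1234)(56)$ and completing the proof.
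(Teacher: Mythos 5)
Your converse direction (Fano axiom gives $P_A \supset O(a)$) and your cycle-type analysis are both correct; in fact the $S_4/V \cong S_3$ argument showing that a $4$-cycle induces a transposition on the three diagonal points is a cleaner, more conceptual route than the paper's explicit verification that $Fq_1 = q_2$, $Fq_2 = q_1$, $Fq_3 = q_3$ for $q_1 = \langle \langle a, Fa \rangle, \langle F^2a, F^3a \rangle \rangle$, etc.

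However, your forward direction contains a genuine error. You assert that ``$P$ contains no three collinear points by the definition of a Fano plane,'' and conclude that any four points of $P$ form a $4$-arc. This is the opposite of the truth: by definition a Fano plane has $3$ points on \emph{every} one of its $7$ lines, so it contains $7$ collinear triples, and a generic $4$-point subset of a Fano plane is \emph{not} a $4$-arc (it typically contains a collinear triple). What saves the lemma is a dichotomy specific to Frobenius orbits, which the paper proves first and which your argument omits entirely: if any three points of $O(a)$ are collinear, say on a line $\ell$, then $\ell$ and $F\ell$ share at least two points of $O(a)$, forcing $\ell = F\ell$ and hence all four points of $O(a)$ onto the single $F$-invariant line $\ell$. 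So $O(a)$ either lies entirely on one line or is a $4$-arc --- the intermediate case of exactly three collinear points cannot occur for an orbit. Only then can one conclude: since no ambient line meets a Fano subplane in more than $3$ points, $O(a) \subset P$ rules out the ``all on a line'' alternative, so $O(a)$ must be a $4$-arc. Without this dichotomy your forward implication does not go through, and the same dichotomy is reused elsewhere in the paper (e.g.\ in ruling out other cycle types), so it is not a dispensable detail.
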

\begin{proof}
    If $\{a, F^ia, F^ja\}$ are collinear then either $\{a, Fa, F^2a\}$ are on a line or $\{a, Fa, F^3a\}$. Call these lines $\ell_1$ and $\ell_2$ respectively. Notice $|\ell_1 \cap F\ell_1| = 2$ and $|\ell_2 \cap F\ell_2| = 2$. Thus either $O(a)$ on a line or forms a $4$-arc.

    Clearly if $O(a) \subset P$ then $O(a)$ cannot have four points on one line, thus $O(a) = A$ forms a $4$-arc. Since $A$ is a $4$-arc then $P_A = P$. We know from the Fano axiom that for any $4$-arc $A = O(a)$ $A \subset P$ for a unique Fano plane $P$, which is $P_A$.
    
    Now we can analyze $P_A$, the three intersection points in $P_A$ are $q_1 = \langle \langle a, Fa \rangle, \langle F^2a, F^3a \rangle \rangle$, $q_2 = \langle \langle Fa, F^2a \rangle, \langle a, F^3a \rangle \rangle$, and $q_3 = \langle \langle a, F^2a \rangle, \langle Fa, F^3a \rangle \rangle$. Notice that $Fq_1 = q_2$, $Fq_2 = q_1$, and $Fq_3 = q_3$. Thus $P_A$ is $F$-invariant and the action of $F$ on $P_A$ gives the cycle type $\lambda = (1234)(56)$, as desired.
\end{proof}

\begin{lemma}\label{L37}
    Take some $a, b \in D_2$, $a \notin O(b)$,  $O(a) \cup O(b) \subset P$ for some Fano plane $P$ if and only if $O(a) \cup O(b)$ forms a $4$-arc furthermore $P \in C_7^\lambda$ where $\lambda = (12)(34)$.
\end{lemma}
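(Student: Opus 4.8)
The plan is to follow the template of Lemma \ref{L36}. Write $A = O(a) \cup O(b) = \{a, Fa, b, Fb\}$; since $a, b \in D_2$ we have $F^2a = a$, $F^2 b = b$, $a \neq Fa$, $b \neq Fb$, and the hypothesis $a \notin O(b)$ makes these four points distinct, with $F$ acting on $A$ as the double transposition $(a\,Fa)(b\,Fb)$. The lemma then splits into the equivalence (containment in a Fano plane $\iff$ $A$ is a $4$-arc) and the "furthermore" about the cycle type of that Fano plane.

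For the implication that a Fano plane containing $A$ forces $A$ to be a $4$-arc, I would argue by contradiction. Suppose three of the four points of $A$ are collinear, lying on a line $\ell$. Up to relabeling the two orbits, the collinear triple either contains the full orbit $O(a)$ (so $\ell = \langle a, Fa\rangle$ and $\ell$ contains one point of $O(b)$) or contains the full orbit $O(b)$ (so $\ell = \langle b, Fb\rangle$ and $\ell$ contains one point of $O(a)$). In the first case, applying $F$ and using $F\langle a, Fa\rangle = \langle Fa, a\rangle = \ell$ shows $\ell$ also contains the remaining point of $O(b)$, hence all of $A$; the second case is symmetric. But every line of a Fano plane carries exactly three points, so $A \not\subset P$, a contradiction. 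Hence $A$ is a $4$-arc. The reverse implication is immediate: if $A$ is a $4$-arc then by the Fano axiom it is contained in a (unique) Fano plane $P_A$.

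For the "furthermore", I would take the labeling $p_1 = a$, $p_2 = Fa$, $p_3 = b$, $p_4 = Fb$, so that $P_A = A \cup \{q_1, q_2, q_3\}$ with $q_1 = \langle\langle a, Fa\rangle, \langle b, Fb\rangle\rangle$, $q_2 = \langle\langle a, b\rangle, \langle Fa, Fb\rangle\rangle$, $q_3 = \langle\langle a, Fb\rangle, \langle Fa, b\rangle\rangle$. A direct computation exactly as in Lemma \ref{L36}, using $F^2 a = a$ and $F^2 b = b$, gives $F q_i = q_i$ for $i = 1, 2, 3$. Since $F$ also swaps $a \leftrightarrow Fa$ and $b \leftrightarrow Fb$, the set $P_A$ is $F$-invariant and the induced action on its seven points has cycle type consisting of two $2$-cycles and three fixed points, i.e. $\lambda = (12)(34)$, so $P_A \in C_7^\lambda$.

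The computations $F q_i = q_i$ and the distinctness of the seven points of $P_A$ are routine (the latter is part of $P_A$ being an honest Fano plane, from the Fano axiom). The only step needing a little care is the reduction in the forward direction: showing that a collinear triple inside the $F$-invariant set $A$ forces all four points onto one line — this is precisely where the hypothesis $a, b \in D_2$, giving $F$ order two on each orbit, is used, and it is the place where characteristic $2$ versus $p > 2$ could in principle differ.
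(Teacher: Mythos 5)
Your proof is correct and follows essentially the same route as the paper: reduce "not a $4$-arc" to "all four points on one $q$-line" via the $F$-invariance of $\langle b,Fb\rangle$ (resp.\ $\langle a,Fa\rangle$), invoke the Fano axiom for the converse, and check directly that the three diagonal points $q_1,q_2,q_3$ are each $F$-fixed, giving cycle type $(12)(34)$. Your version is in fact slightly more careful than the paper's, which compresses the case analysis into a "without loss of generality" and contains a small notational slip ($p_2,p_3$ for $q_2,q_3$).
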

\begin{proof}
    Suppose $O(a) \cup O(b)$ did not form a $4$-arc, without loss of generality, suppose $a \in \ell = \langle b, Fb \rangle$. Notice $F\ell = \ell$ so $\ell$ is a $q$-line, and thus $a \in \ell$ implies $Fa \in F\ell = \ell$. Thus $O(a) \cup O(b)$ lie on a line, and thus either $O(a) \cup O(b)$ lies on a line or forms a $4$-arc.

    Since $O(a) \cup O(b) \subset P$, $O(a) \cup O(b) = A$ forms a $4$-arc. We know from the Fano axiom that for any $4$-arc $A = O(a) \cup O(b)$, $A \subset P$ for a unique Fano plane $P$, which is $P_A$. 
    
    Now we can analyze $P_A$. Label $q_1 = \langle \langle a, Fa \rangle, \langle b, Fb \rangle \rangle$, since it is the intersection of two $q$ lines it is a $q$-point. Let $q_2 = \langle \langle b, Fa \rangle, \langle a, Fb \rangle \rangle$ and $q_3 = \langle \langle b, a \rangle, \langle Fa, Fb \rangle \rangle$, notice $Fp_2 = p_2$ and $p_3 = Fp_3$. Thus $P_A$ is $F$-invariant and the action of $F$ on $P_A$ gives the cycle type $\lambda = (12)(34)$ as desired.
\end{proof}

If $\lambda \neq (1234)(56)$ but $S \in C_7^\lambda$ contains $O(a)$ for $a \in D_4$, then if $O(a)$ is on a line, for any $4$-arc subset $A \subset S$, we know that $P_A \neq S$ since $P_A$ doesn't have four points on a line. Now if $A = O(a)$ forms a $4$-arc, then we know that $P_A \neq P$ since the action of $F$ on $P_A$ does not give $\lambda$ from Lemma \ref{L36}. Therefore Proposition \ref{P34} implies $\lambda$ does not admit a Fano plane construction. From Lemma \ref{L37}, identical argument can be made for $\lambda \neq (12)(34)$ where $S \in C_7^\lambda$ contains $O(a) \cup O(b)$ for $a, b \in D_2$, $a \neq b$.

\begin{corollary}\label{C38}
The cycle types $(1234)(567), (1234), (123)(45)(67), (12)(34)(56)$ do not admit Fano planes.
\end{corollary}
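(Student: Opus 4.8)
The plan is to derive all four cases directly from Proposition \ref{P34} together with the structural observations recorded just before the corollary (which repackage Lemmas \ref{L36} and \ref{L37}). First I would, for each cycle type $\lambda$ in the list, write an arbitrary $S \in C_7^\lambda$ as a union of Frobenius orbits dictated by the parts of $\lambda$, and observe that in every case $S$ must contain either the orbit $O(a)$ of a $q^4$-point, or the disjoint union $O(a)\cup O(b)$ of two distinct orbits of $q^2$-points. Since none of the four $\lambda$ equals the ``privileged'' type $(1234)(56)$ or $(12)(34)$ allowed by Lemmas \ref{L36} and \ref{L37}, this is enough.

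Concretely: for $\lambda = (1234)(567)$ write $S = O(a)\cup O(b)$ with $a\in D_4$, $b\in D_3$; for $\lambda = (1234)$ (i.e.\ $(1234)(5)(6)(7)$) write $S = O(a)\cup\{b,c,d\}$ with $a\in D_4$ and $b,c,d\in D_1$. In both cases $O(a)\subset S$ with $a\in D_4$ and $\lambda\neq(1234)(56)$, so the observation following Lemma \ref{L37} applies: if $O(a)$ lies on a line then every $4$-arc $A\subset S$ has $P_A\neq S$, since a Fano plane has no line carrying four of its points while $S$ does; and if $O(a)$ is itself a $4$-arc then taking $A=O(a)$ already gives $P_A\neq S$ by the cycle-type half of Lemma \ref{L36}. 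Either way Proposition \ref{P34} forbids a Fano plane construction.

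For the remaining two types I would argue symmetrically with $q^2$-orbits. For $\lambda=(123)(45)(67)$ write $S = O(a)\cup O(b)\cup O(c)$ with $a\in D_3$ and $b,c\in D_2$, $b\notin O(c)$; for $\lambda=(12)(34)(56)$ (i.e.\ $(12)(34)(56)(7)$) write $S = O(a)\cup O(b)\cup O(c)\cup\{d\}$ with $a,b,c\in D_2$ pairwise non-conjugate and $d\in D_1$. In each case $S$ contains a disjoint union of two distinct $q^2$-orbits and $\lambda\neq(12)(34)$, so the analogue of the above statement built on Lemma \ref{L37} applies and Proposition \ref{P34} again rules out a Fano plane.

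I do not expect a genuine obstacle; the work is purely in organizing the case split by orbit type. The one point to be careful about is that the dichotomy ``this configuration of points either lies on a line or forms a $4$-arc'' from Lemmas \ref{L36} and \ref{L37} is exhaustive, and that in the collinear branch the conclusion $P_A\neq S$ is needed for \emph{every} $4$-arc subset of $S$ — including $4$-arcs that combine orbit points with the extra fixed points present when $\lambda=(1234)$ or $\lambda=(12)(34)(56)$ — which is fine because $P_A$ never has four of its own points on a single line, whereas such an $S$ does.
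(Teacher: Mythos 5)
Your proposal is correct and is essentially identical to the paper's argument: the paper proves this corollary via the paragraph immediately preceding it, which applies Proposition \ref{P34} together with the line-or-$4$-arc dichotomies and cycle-type computations of Lemmas \ref{L36} and \ref{L37} to any $S$ containing a $q^4$-orbit (for $(1234)(567)$ and $(1234)$) or two distinct $q^2$-orbits (for $(123)(45)(67)$ and $(12)(34)(56)$). Your explicit orbit decompositions and the observation that a Fano plane never has four collinear points fill in exactly the details the paper leaves implicit.
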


We are only left to analyze the following cycle types of $S_7$: $(123)(45)$, $(123)$, and $(12)$.

\begin{lemma}\label{L39}
    Take some $a \in D_3$ and $p_1, p_2 \in D_1$, $p_1 \neq p_2$, either $O(a)$ lies on a $q$-line or the set $O(a) \cup \{p_1, p_2\}$ forms a $5$-arc.
\end{lemma}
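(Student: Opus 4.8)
The plan is to follow the dichotomy in the statement, keeping track of the $F$-action at every step. First I would handle the case in which $O(a) = \{a, Fa, F^2a\}$ is collinear. Since $a \in D_3$ the orbit consists of three distinct points, so there is a unique line $\ell$ containing them, and this line is simultaneously $\langle a, Fa\rangle$ and $\langle Fa, F^2a\rangle$. Applying $F$ to the first description gives $F\ell = \langle Fa, F^2a\rangle = \ell$, so $\ell$ is $F$-invariant, i.e. a $q$-line, and the first alternative of the lemma holds. Hence from now on I may assume $O(a)$ is a $3$-arc, and the goal becomes: no three points of $O(a)\cup\{p_1,p_2\}$ are collinear.

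The triples contained in $O(a)$ are excluded by assumption, so two families of potentially bad triples remain: those of the form $\{F^ia, F^ja, p_k\}$ with $i\neq j$, $k\in\{1,2\}$, and those of the form $\{F^ia, p_1, p_2\}$. The key observation — and the step I expect to do the real work — is that the three ``sides'' $\langle a, Fa\rangle$, $\langle Fa, F^2a\rangle$, $\langle F^2a, a\rangle$ of the triangle $O(a)$ are permuted cyclically by $F$ (using $F^3a = a$), hence form a single $F$-orbit of size $3$, so each of them is a $q^3$-line and in particular contains no $q$-point. Concretely, if a point $p \in D_1 \subseteq \mathbb{P}^2(\mathbb{F}_q)$ lay on $\langle a, Fa\rangle$, then $p = Fp$ would also lie on $F\langle a, Fa\rangle = \langle Fa, F^2a\rangle$, forcing $p \in \langle a, Fa\rangle \cap \langle Fa, F^2a\rangle = \{Fa\}$ (the two lines are distinct since $O(a)$ is a $3$-arc), which is absurd because $Fa$ is a $q^3$-point, not a $q$-point. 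This rules out every triple $\{F^ia, F^ja, p_k\}$, since such a triple would place the $q$-point $p_k$ on a side of the triangle.

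It remains to rule out triples $\{F^ia, p_1, p_2\}$. Here the line $\langle p_1, p_2\rangle$ joins two $q$-points, hence is $F$-invariant; if it contained $F^ia$, then applying $F$ would put the whole orbit $O(a)$ on it, contradicting that $O(a)$ is a $3$-arc. Since the only remaining triples from the five points $O(a)\cup\{p_1,p_2\}$ are those already treated, this exhausts the cases and shows $O(a)\cup\{p_1,p_2\}$ is a $5$-arc. The argument is entirely elementary once the cyclic $F$-action on the sides of the triangle is exploited, and uses no hypothesis on the characteristic.
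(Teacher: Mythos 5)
Your proof is correct and follows essentially the same route as the paper: both families of bad triples are ruled out by comparing a line with its Frobenius image, and your explicit check that a collinear $O(a)$ must lie on a $q$-line is a minor but welcome completion of the stated dichotomy. One caveat: the phrase ``each of them is a $q^3$-line and in particular contains no $q$-point'' is misleading as written---a $q^3$-line can certainly pass through a $q$-point in general---but your subsequent ``concretely'' argument, which pins any putative $q$-point to the intersection $\langle a, Fa\rangle \cap \langle Fa, F^2a\rangle = \{Fa\}$, is the correct justification and is what actually carries the proof.
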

\begin{proof}
    Suppose $O(a)$ does not lie on a $q$-line and $B = \{a, Fa, F^2a, p_1, p_2\}$ does not form a $5$-arc. Since $O(a)$ is not on a line either some line contains one point from $O(a)$ and both $\{p_1, p_2\}$ or two points from $O(a)$ and one from $\{p_1, p_2\}$. 
    
    In the first case notice that the line $\langle p_1, p_2 \rangle$ defines a $q$-line so if $O(a) \cap \langle p_1, p_2 \rangle \neq \emptyset$ then $O(a) \subset \langle p_1, p_2 \rangle$ a contradiction. In the second case, without loss of generality suppose that $\{a, Fa, p_1\}$ were on a line $\ell$, then $ \{Fa, p_1\} \subset F\ell$ so $F\ell = \ell$ and $O(a) \subset \ell$ a contradiction. Therefore either $O(a)$ lies on a line or $B$ forms a $5$-arc as desired.
\end{proof}

\subsection{(123)(45)}

Set $\lambda = (123)(45)$, and take any $O(a) \cup O(b) \cup \{p_1, p_2\} = S \in C_7^\lambda$, where $a \in D_3$, $b \in D_2$, and $p_1, p_2 \in D_1$, $p_1 \neq p_2$.

Suppose that $O(a)$ lies on a $q$-line $\ell$. If for some other point $q \in S \setminus O(a)$, $q \in \ell$, then we know that if there exists some $4$-arc $A \subset S$ then $P_A \neq S$ trivially. Consider the case where $\ell \cap S = O(a)$. If $A = \{a, Fa, p_1, p_2\}$ does not form a $4$-arc then $A \subset \ell$. Consider the point $c = \langle \langle a, p_1 \rangle, \langle Fa, p_2 \rangle \rangle$, notice $F^3c= c$ so either $c$ a $q^3$-point or a $q$-point. Therefore $P_A \neq S$.

If $O(a)$ does not lie on a $q$-line then from Lemma \ref{L39} $\{a, Fa, F^2a, p_1, p_2\}$ forms a $5$-arc and for some $4$-arc subset $A \subset S$, $P_A \neq S$ since $P_A$ does not contain a $5$-arc. Therefore, from Proposition \ref{P34}, $\lambda = (123)(45)$ does not admit a Fano plane as desired.

\subsection{(123)}

Set $\lambda = (123)$ and take any $O(a) \cup \{p_1, p_2, p_3, p_4\} = S \in C_7^\lambda$ where $a \in D_3$ and $p_i \in D_1$ where $p_i \neq p_j$ if $i \neq j$.

From Lemma \ref{L39} if $O(a)$ was not contained in a line $\ell$, then $S$ contains a $5$-arc and as argued before $P_A \neq S$ for any $4$-arc subset of $S$. If $O(a)$ was on a line $\ell$, then $c = \langle p_1, p_2 \rangle \cap \ell \in P$. Notice that $c$ is the intersection of two $q$ lines and must be a $q$ point. Thus forcing four points of $P$ on a line. From Proposition \ref{P34} we get that $\lambda = (123)$ does not admit a Fano plane.

\subsection{(12)}

Set $\lambda = (12)$ and take any $O(a) \cup \{p_1, p_2, p_3, p_4, p_5 \} = S \in C_7^\lambda$ where $p_i \neq p_j$ if $i \neq j$.

Suppose  $O(a) \cup \{p_1, p_2\}$  does not define a $4$-arc. If $\{a, p_1, p_2\}$ or $\{Fa, p_1, p_2\}$ are collinear then $O(a) \cup \{p_1, p_2\}$ lie on a line and thus we have $4$-points on a line. Without loss of generality, suppose $O(a) \cup \{p_1\}$ are on a line $\ell$. Then consider the set $O(a) \cup \{p_2, p_3\}$, either this defines a $4$-arc, or three points in the set are collinear. As before, if $\{a, p_2, p_3\}$ or $\{Fa, p_2, p_3\}$ are collinear then $O(a) \cup \{p_2, p_3\}$ lie on a line so $4$-points lie on a line. This forces $p_2$ or $p_3$ on the line $\ell = \langle a, Fa \rangle$, but $p_1 \in \ell$ so there are $4$-points on a line.

Therefore for some $i, j$, if $S$ does not contain $4$-points on a line then $A = O(a) \cup \{p_i, p_j\}$ defines a $4$-arc. Notice that $b = \langle \langle a, p_i \rangle , \langle Fa, p_j \rangle \rangle$ defines a $q^2$-point since $Fb \neq b$ and thus $P_A \neq S$. From Proposition \ref{P34} we get $\lambda = (12)$ does not admit a Fano plane.\\

This proves Theorem \ref{3.2T}.\\

For the following cycle types $(1234)(56)$, $(123)(456)$, and $(12)(34)$ we can show that each Fano plane is uniquely generated by a specific $4$-arc. This allows us to more easily count the number of Fano planes of each cycle type.

\begin{proposition}\label{P310}
    There exists a bijection between Fano plane of cycle type $\lambda = (1234)(56)$ and $4$-arcs of cycle type $(1234)$
\end{proposition}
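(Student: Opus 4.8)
The plan is to write down the two natural maps and verify they invert one another, relying on Lemma~\ref{L36} and the Fano axiom. First I would define the forward map $\Phi$ sending a $4$-arc $O(a)$ of cycle type $(1234)$ (so $a \in D_4$) to the Fano plane $P_{O(a)}$ it generates. That $\Phi$ indeed lands among Fano planes of cycle type $(1234)(56)$ is exactly the content of Lemma~\ref{L36}, so no further work is needed at this step.

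For the reverse map $\Psi$, I would start with a Fano plane $P$ of cycle type $\lambda = (1234)(56)$. The permutation $\lambda$ of the seven points has cycles of lengths $4$, $2$, and $1$, so the action of $F$ on $P$ has a unique orbit of size $4$, say $O(a)$ with $a \in D_4$; set $\Psi(P) = O(a)$. The one point that requires an argument is that $O(a)$ is a $4$-arc. Since $F$ permutes the points of $P$ and preserves collinearity, if some line of $P$ were contained in the four-point set $O(a)$ then its image under $F$ would be another such line; two distinct lines of a Fano plane meet in a single point, so two lines inside $O(a)$ would force $|O(a)| \ge 5$, while a single $F$-invariant line inside $O(a)$ would have to contain the whole orbit $O(a)$ of size $4$ inside a set of size $3$ — both impossible. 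As the only collinear triples in a Fano plane are its seven lines, $O(a)$ contains no collinear triple, hence is a $4$-arc, and it manifestly has cycle type $(1234)$.

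Finally I would check the two compositions. Given a $4$-arc $A = O(a)$ of cycle type $(1234)$, the set $A$ is the unique size-$4$ $F$-orbit inside the Fano plane $\Phi(A) = P_A$, so $\Psi(\Phi(A)) = A$. Conversely, given a Fano plane $P$ of cycle type $(1234)(56)$, the $4$-arc $\Psi(P) = O(a)$ lies in $P$, and by the Fano axiom it is contained in a \emph{unique} Fano plane; since that unique plane is simultaneously $P$ and $P_{O(a)} = \Phi(\Psi(P))$, we get $\Phi(\Psi(P)) = P$. Thus $\Phi$ and $\Psi$ are mutually inverse, which is the assertion of the proposition.

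The only genuine obstacle is the claim that the size-$4$ $F$-orbit of a Fano plane of cycle type $(1234)(56)$ never contains one of the plane's lines; once that is in hand, the proposition follows formally from Lemma~\ref{L36} and the Fano axiom.
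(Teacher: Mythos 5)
Your proof is correct and is essentially the paper's argument spelled out: the paper simply declares Proposition~\ref{P310} a direct corollary of Lemma~\ref{L36} together with the Fano axiom, which is exactly the content of your maps $\Phi$ and $\Psi$. The only extra work you do --- showing the size-$4$ orbit of a type-$(1234)(56)$ Fano plane is a $4$-arc --- is already the ``only if'' direction of Lemma~\ref{L36} (since that orbit is $O(a)$ for some $a \in D_4$ contained in a Fano plane), so it is harmless duplication rather than a gap.
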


\begin{proposition}\label{P311}
    There exists a bijection between Fano plane of cycle type $\lambda = (12)(34)$ and $4$-arcs of cycle type $(12)(34)$
\end{proposition}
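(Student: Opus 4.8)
The plan is to exhibit a pair of mutually inverse maps. In one direction, send a $4$-arc $A$ of cycle type $(12)(34)$ to the Fano plane $P_A$ it generates. Writing $A = O(a) \cup O(b)$ with $a, b \in D_2$ lying in distinct orbits, Lemma \ref{L37} tells us $P_A$ is a Fano plane on which $F$ acts with cycle type $(12)(34)$; moreover, inspecting the proof of that lemma, the two $F$-orbits of size $2$ inside $P_A$ are exactly $O(a)$ and $O(b)$, while the three added points $q_1, q_2, q_3$ are $q$-points (fixed by $F$). In the other direction, send a Fano plane $P$ of cycle type $(12)(34)$ to the set $A_P$ consisting of its four points not fixed by $F$; since $P \in C_7^\lambda$ with $\lambda = (12)(34)$, this set is precisely the union of the two $F$-orbits of size $2$ in $P$, so it is $F$-invariant and $F$ acts on it with cycle type $(12)(34)$.

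The one point that requires argument is that $A_P$ is actually a $4$-arc, i.e. no three of its four points are collinear. Suppose instead that three of them, say $T$, are collinear. In a Fano plane a collinear triple is precisely a line, so both $T$ and $F(T)$ are lines of $P$. Now $A_P$ is a union of two $F$-orbits $O_1, O_2$ of size $2$, and every $3$-subset of $A_P$ contains exactly one of these orbits in full, say $O_1 \subseteq T$, together with a single point of $O_2$. Then $O_1 = F(O_1) \subseteq T \cap F(T)$, so the lines $T$ and $F(T)$ meet in at least two points and must therefore coincide; but $F$ interchanges the two points of $O_2$, so $F(T)$ contains the point of $O_2$ that $T$ omits, contradicting $F(T) = T$. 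Hence $A_P$ is a $4$-arc of cycle type $(12)(34)$, and the second map is well defined.

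Finally I would check that the two maps are mutually inverse. Starting from a $4$-arc $A = O(a) \cup O(b)$ of cycle type $(12)(34)$, the description of $P_A$ above shows its four non-$F$-fixed points are exactly $O(a) \cup O(b) = A$, so $A_{P_A} = A$. Starting from a Fano plane $P$ of cycle type $(12)(34)$, the set $A_P$ is a $4$-arc with $A_P \subseteq P$, so by the property recorded just before Proposition \ref{P34} — namely $P_A = P$ for every $4$-arc $A \subseteq P$ — we get $P_{A_P} = P$. Thus the two maps are inverse bijections, which proves the proposition.

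The main obstacle is the verification that the four non-fixed points of a cycle-type-$(12)(34)$ Fano plane are in general position; this is where the incidence axioms of the Fano plane (three points on each line, two distinct lines meeting in a unique point) together with $F$-equivariance are genuinely used, while everything else is bookkeeping resting on Lemma \ref{L37}.
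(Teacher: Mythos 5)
Your proof is correct and follows essentially the same route as the paper, which records this proposition as a direct corollary of Lemma \ref{L37}: the bijection $A \mapsto P_A$, with inverse sending a Fano plane $P$ of type $(12)(34)$ to its four non-$F$-fixed points, is exactly what the paper intends. Your in-situ verification that those four points form a $4$-arc (via the Fano incidence axioms and $F$-equivariance) is fine but could simply be replaced by citing Lemma \ref{L37}, which already states that $O(a)\cup O(b)\subset P$ forces $O(a)\cup O(b)$ to be a $4$-arc.
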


Proposition \ref{P311} and Proposition \ref{P310} are direct corollaries from Lemma \ref{L36} and Lemma \ref{L37} respectively.

\begin{proposition}\label{P312}
    There exists a bijection between Fano plane of cycle type $\lambda = (123)(456)$ and $4$-arcs of cycle type $(123)$
\end{proposition}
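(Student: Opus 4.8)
The plan is to exhibit two mutually inverse maps: from Fano planes $P$ of cycle type $(123)(456)$ to $4$-arcs of cycle type $(123)$, by sending such a $P$ to the unique $F$-invariant $4$-arc contained in it; and from $4$-arcs $A$ of cycle type $(123)$ to Fano planes, by $A \mapsto P_A$, checking along the way that $P_A$ has cycle type $(123)(456)$. Here I will use the Fano axiom, the fact (recorded before Proposition \ref{P34}) that $P_A = P$ for every $4$-arc $A$ contained in a Fano plane $P$, and the structure of $\mathrm{PGL}(3,\mathbb{F}_2)$.

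I would first analyse the map $A \mapsto P_A$. Fix a $4$-arc $A = O(a) \cup \{p_1\}$ with $a \in D_3$ and $p_1 \in D_1$; then $A$ is $F$-invariant. By the Fano axiom, $A$ lies in a unique Fano plane $P_A$, and since $P_A$ is obtained from $A$ canonically by adjoining the three diagonal points, $F$-invariance of $A$ forces $F(P_A) = P_A$, so $F$ acts on the seven points of $P_A$ as a collineation of the Fano plane, i.e.\ as an element of $\mathrm{PGL}(3,\mathbb{F}_2)$. All seven points of $P_A$ lie in $\mathbb{P}^2(\mathbb{F}_{q^3})$, since they arise from points of $\mathbb{P}^2(\mathbb{F}_{q^3})$ by intersecting $\mathbb{F}_{q^3}$-rational lines; hence $F^3$ fixes $P_A$ pointwise, and $F$ is not the identity on $P_A$ because $a \notin D_1$. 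So $F$ acts on $P_A$ with order $3$. An order-$3$ element of $\mathrm{PGL}(3,\mathbb{F}_2)=\mathrm{GL}(3,\mathbb{F}_2)$ has characteristic polynomial $(x-1)(x^2+x+1)$, hence exactly one fixed point on the seven points of the Fano plane, with the remaining six points splitting into two $3$-cycles; therefore $F$ acts on $P_A$ with cycle type $(123)(456)$, the fixed point being $p_1$ and one $3$-cycle being $O(a)$.

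For the reverse direction, let $P$ be a Fano plane with $F$-action of cycle type $(123)(456)$, and write its point-orbits as $\{p_1\}$, $O(a)$, $O(b)$ with $a, b \in D_3$. The only $F$-invariant $4$-subsets of $P$ are $\{p_1\}\cup O(a)$ and $\{p_1\}\cup O(b)$, so it suffices to show exactly one of them is a $4$-arc. In a Fano plane the $4$-arcs are precisely the complements of the seven lines, so $\{p_1\}\cup O(a)$ is a $4$-arc if and only if its complement $O(b)$ is a line, and symmetrically. At most one of $O(a), O(b)$ can be a line, since two distinct lines of a Fano plane meet in a point while $O(a)\cap O(b)=\emptyset$. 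On the other hand, the same characteristic-polynomial computation shows $F$ fixes exactly one line $\ell_0$ of $P$ (the unique $2$-dimensional $F$-invariant subspace of $\mathbb{F}_2^3$); since $F$ has only one fixed point it cannot fix the three points of $\ell_0$ pointwise, so it permutes them in a $3$-cycle, which forces $\ell_0$ to be $O(a)$ or $O(b)$. Hence exactly one of $O(a), O(b)$ is a line, so exactly one of $\{p_1\}\cup O(a)$, $\{p_1\}\cup O(b)$ is a $4$-arc; call it $A_P$, a $4$-arc of cycle type $(123)$.

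Finally, I would check the two maps are inverse. If $A$ is a $4$-arc of cycle type $(123)$ then $A$ is an $F$-invariant $4$-arc contained in $P_A$, so by the uniqueness just proved $A = A_{P_A}$; and if $P$ is a Fano plane of cycle type $(123)(456)$ then $A_P$ is a $4$-arc contained in $P$, so $P_{A_P} = P$. This yields the bijection. I expect the main obstacle to be the uniqueness step in the reverse direction: the "at most one" half is immediate from Fano-plane geometry, but the "at least one" half rests on the fact that the order-$3$ collineation $F$ has a fixed line whose three points form a single $F$-orbit, which is exactly where one must invoke the structure of order-$3$ elements of $\mathrm{PGL}(3,\mathbb{F}_2)$ (equivalently, that a collineation of a finite projective plane fixes equally many points and lines).
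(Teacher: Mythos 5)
Your proof is correct, and while the overall skeleton (exhibit $A \mapsto P_A$ and its inverse $P \mapsto A_P$) is forced by the statement, your key verifications go by a genuinely different route than the paper's. For the forward direction the paper writes down the three diagonal points $q_1 = \langle \langle a, p \rangle, \langle Fa, F^2a \rangle \rangle$, etc., and checks by hand that $Fq_1 = q_2$, $Fq_2 = q_3$, $Fq_3 = q_1$, so that they form a single $q^3$-orbit; you instead identify $F|_{P_A}$ with an order-$3$ collineation of the Fano plane and read off the cycle type $1+3+3$ from the characteristic polynomial $(x-1)(x^2+x+1)$. For uniqueness and surjectivity the paper argues that the diagonal points are collinear (the Fano axiom), so the second $F$-invariant $4$-subset is never a $4$-arc, and that if both $3$-orbits lay on lines those lines would meet at the $q$-point, forcing four points of $P$ on a line; you instead use that the $4$-arcs of a Fano plane are exactly the line-complements together with the fact that an order-$3$ collineation fixes exactly one line, whose three points must then form a single $3$-cycle. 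Your structural approach buys something the paper's does not make explicit: the five cycle types of Theorem \ref{3.2T} are precisely the point cycle types of elements of $\mathrm{PGL}(3,\mathbb{F}_2)$, so viewing Frobenius as a collineation of the subplane uniformly explains which $\lambda$ can admit Fano planes at all. The paper's hands-on computation, in turn, directly exhibits that the second $3$-orbit lies on a $q$-line, a fact reused in the counting of Section \ref{S4}. The one step you assert without justification is that $4$-arcs in a Fano plane are exactly the seven line-complements; this is standard and consistent with the count of seven $4$-arcs per Fano plane that the paper records, so it is not a gap, but a sentence of justification (each line-complement misses every line, and there are only seven $4$-arcs by the $\mathrm{PGL}(3,\mathbb{F}_2)$ orbit count) would make the argument self-contained.
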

\begin{proof}
    From the Fano axiom we know that every $4$-arc $A$ forms a Fano plane. Consider a $4$-arc of cycle type $(123)$, thus $A = O(a) \cup \{p\}$ for $a \in D_3$ and $p \in D_1$. Now consider the intersection points of $P_A$, $q_1 = \langle \langle a, p \rangle, \langle Fa, F^2a \rangle \rangle$, $q_2 = \langle \langle Fa, p \rangle, \langle F^2a, a \rangle \rangle$, and $q_3 = \langle \langle F^2a, p \rangle, \langle Fa, a \rangle \rangle$. Notice that $Fq_1 = q_2$, $Fq_2 = q_3$, and $Fq_3 = q_1$, therefore $\{q_1, q_2, q_3\} \in O(b)$ for some $b \in D_3$. Thus $P_A = O(a) \cup O(b) \cup \{p\}$ is $F$-invariant, and is of cycle type $\lambda = (123)(456)$. Furthermore, $q_1, q_2, q_3$ are collinear so $O(b)$ lies on a $q$-line, thus $P_A$ only has one $4$-arc of type $(123)$.

    Take any Fano plane $O(a) \cup O(b) \cup \{p\} = P \in C_7^\lambda$, first recall that every $4$-arc $A = O(a) \cup \{p\}$ of type $(123)$ is formed by picking $a$ such that $O(a)$ forms a $3$-arc, and then picking any $q$-point $p$. Thus if either $O(a)$ or $O(b)$ form a $3$-arc then $O(a) \cup \{p\}$ forms a $4$-arc of type $(123)$. If both $O(a)$ and $O(b)$ lie on a line, we know their intersection $p$ must be included in $P$ and is neither in $O(a)$ or $O(b)$ thus four $O(a) \cup \{p\}$ lies ona  line, and thus cannot be a subset of a Fano plane.
\end{proof}

Unfortunately for $\lambda = (1234567)$ we cannot provide a way to count the number of Fano planes of this type, furthermore we do not a priori know if they exist. However, we are able to provide a criteria for their existence.

\begin{proposition}\label{P313}
    Let $\lambda = (1234567)$, $O(a) = S \in C_7^\lambda$ is a Fano plane if and only if either $\{a, Fa, F^5a\}$ or $\{a, Fa, F^3a\}$ are collinear.
\end{proposition}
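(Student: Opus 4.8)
The plan is to reduce everything to the arithmetic of difference sets in $\mathbb{Z}/7\mathbb{Z}$. Write $S = O(a) = \{a, Fa, \ldots, F^6 a\}$; since $a \in D_7$ these seven points are distinct, and abbreviate $a_k := F^k a$ for $k \in \mathbb{Z}/7\mathbb{Z}$. Two elementary remarks drive the proof. First, $F$ carries lines to lines, so if $\{a_i, a_j, a_k\}$ is collinear then so is every cyclic translate $\{a_{i+m}, a_{j+m}, a_{k+m}\}$; thus collinearity of $\{a_0, a_1, a_3\}$ or of $\{a_0, a_1, a_5\}$ propagates to a full $F$-orbit of seven collinear triples. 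Second, a short finite check shows that $\{0,1,3\}$ and $\{0,1,5\}$ are, up to translation, the only $3$-subsets of $\mathbb{Z}/7\mathbb{Z}$ in which every nonzero residue is represented exactly once as a difference of two elements (the two translation classes being interchanged by $x \mapsto -x$); this is exactly the condition making a translation-invariant family of seven such triples into the incidence structure of the projective plane of order $2$.

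For the forward implication, assume $S$ is a Fano plane and examine its seven ambient lines. No such line is $F$-invariant: an $F$-invariant line meets $S$ in three points forming an $F$-invariant subset, which is impossible because $F$ acts on $S$ as a single $7$-cycle. Hence $F$ permutes the seven lines in one $7$-cycle. Fix a line $L_0$, let $I \subset \mathbb{Z}/7\mathbb{Z}$ be the index set of the three points of $S$ lying on it, so that $L_k := F^k L_0$ has index set $I+k$ and $\{L_k\}_k$ is the complete list of lines. Incidence reads $a_i \in L_k \iff i - k \in I$, and the Fano requirement that each pair of points lies on exactly one line becomes: for every $d \neq 0$ there is exactly one ordered pair of elements of $I$ with difference $d$. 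By the second remark, some translate $I + k$ equals $\{0,1,3\}$ or $\{0,1,5\}$, and then the three points of $S$ on $L_k$ are $a_0, a_1, a_3$ (respectively $a_0, a_1, a_5$); since $F$ preserves collinearity, pulling back by $F^{-k}$ this says exactly that $\{a, Fa, F^3 a\}$ or $\{a, Fa, F^5 a\}$ is collinear.

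For the converse, suppose $\{a, Fa, F^3 a\}$ is collinear (the $F^5$ case is verbatim the same with $\{0,1,3\}$ replaced by $\{0,1,5\}$), so by the first remark every translate $\{a_k, a_{k+1}, a_{k+3}\}$ is collinear, spanning a line $L_k$. I would first rule out the possibility that all seven points of $O(a)$ lie on one line: if some line contained four of them, then among its $F$-translates two would overlap in at least two points, hence coincide, and iterating this forces all seven points onto a single line — a situation that must be excluded, either by an explicit hypothesis on $a$ or because an orbit contained in a line is not regarded as a Fano plane. Once this is excluded, no line meets $O(a)$ in more than three points, which in turn forces $L_0, \ldots, L_6$ to be pairwise distinct (if $L_j = L_k$ with $j \neq k$, the two distinct index sets $\{0,1,3\}+j$ and $\{0,1,3\}+k$ would put at least four of the $a_i$ on one line) and forces each $L_k$ to contain exactly three of the $a_i$. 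The data $(\{a_i\}, \{L_k\})$ then consists of seven points and seven distinct lines, each line carrying exactly three points, each point $a_i$ lying on exactly the three lines $L_k$ with $k \in \{i, i-1, i-3\}$, and each pair of points on a unique common line (because $\{0,1,3\}$ is a difference set) — which is precisely the definition of a Fano plane. Alternatively, one can bypass the bookkeeping: the complement of a collinear triple is then a $4$-arc $A \subset S$, and the difference pattern shows that the three diagonal points of $A$ are exactly the remaining three points of $S$, so $P_A = S$ and $S$ is a Fano plane by the Fano axiom.

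The main obstacle I anticipate is not the difference-set arithmetic, which is a clean finite computation, but the degenerate orbits lying entirely on a line: such an orbit satisfies the collinearity hypothesis (every triple of its points is collinear) yet is not a Fano plane, so the stated equivalence is only correct once these orbits are set aside, and the argument must either carry that caveat as a hypothesis on $a$ or dispose of it using the surrounding counting context. A minor point to keep honest is that the converse must deliver the collinearity for the given point $a$, not merely for some $F^k a$; this is automatic since $F$ permutes lines, so a collinear translate pulls back under $F^{-k}$.
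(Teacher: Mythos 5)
Your proof is correct and takes a genuinely different route from the paper. The paper's forward direction is outsourced to Bergvall's Lemma 6.6 (the trichotomy: $O(a)$ is a $7$-arc, lies on a $q$-line, or one of the two triples is collinear), and its converse is a one-line explicit relabeling of $O(a)$ onto Figure \ref{fig:FP} together with the remark that $O(\ell)$ generates the plane. You instead give a self-contained argument through perfect difference sets in $\mathbb{Z}/7\mathbb{Z}$: for the forward direction you show $F$ must cycle the seven lines and that the index set of a line is forced to be a translate of $\{0,1,3\}$ or $\{0,1,5\}$, and for the converse you verify the incidence axioms directly (or, more in the spirit of the paper, via the $4$-arc $\{F^2a,F^4a,F^5a,F^6a\}$ and the Fano axiom). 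What your approach buys is independence from the external citation and a transparent explanation of \emph{why} exactly the exponents $3$ and $5$ appear; what the paper's approach buys is brevity. Two small points: your exclusion of the ``four points on one line'' case needs the averaging observation that a $4$-subset $J\subset\mathbb{Z}/7\mathbb{Z}$ satisfies $\sum_{m\neq 0}|J\cap(J+m)|=12>6$, so some translate of the ambient line shares two points with it and hence equals it (adjacent translates alone only guarantee one common point). More importantly, the degenerate case you flag is a real one: when $O(a)$ lies on a $q$-line both triples are collinear yet $S$ is not a Fano plane, so the ``if'' direction as literally stated fails there. The paper's proof does not address this either, but its counting and its code do (the algorithm requires \emph{exactly one} of the two collinearities), so your caveat is a correct reading of what the proposition must mean rather than a defect of your argument.
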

\begin{proof}
    Take any $a \in D^7$, we know from Bergvall \cite[Lemma 6.6]{bergvall2020equivariant} that either $O(a)$ forms a $7$-arc, $O(a)$ is on a $q$-line, or either $\{a, Fa, F^5a\}$ or $\{a, Fa, F^3a\}$ are collinear. Clearly if $S$ is a Fano plane then $O(a)$ cannot be a $7$-arc and $O(a)$ cannot be on a $q$-line, thus either $\{a, Fa, F^5a\}$ or $\{a, Fa, F^3a\}$ are collinear.
    
    If $\{a, Fa, F^3a\}$ are on a line $\ell$, then we can label $O(a)$ as follows: $p_1 = a$, $p_2 = F^4a$, $p_3 = Fa$, $p_4 = F^6a$, $q_1 = F^5a$, $q_2 = F^2a$, and $q_3 = F^3a$ as in Figure \ref{fig:FP}. Notice that $O(\ell)$ generates the Fano plane. An identical argument shows that $\{a, Fa, F^5a\}$ collinear implies $O(a)$ is a Fano plane.
\end{proof}

Consider the case where $O(a)$ is a Fano plane and $\{a, Fa, F^5a\}$ are collinear. Let $\ell$ be the line containing $\{a, Fa, F^5a\}$. We can see that $\{\ell, F\ell, F^3\ell\}$ share a point. Since we have a bijection between $q^n$-lines and $q^n$-points, using this bijection we can map between Fano planes of the form $\{a, Fa, F^5a\}$ and $\{a, Fa, F^3a\}$.

\section{Counting 7-Arcs}\label{S4}

Recall we defined $C_7^\lambda$ as the collection of unordered arcs, we set $U \subset C_7^\lambda$ as a collection of $7$-tuples which contain all $7$-arcs. We then compute the size of $\Delta \subset U$ for each $\lambda$, where each $S \in \Delta$ has three points collinear. From our definitions, $U \setminus \Delta$ is precisely the ordered $7$-arcs of type $\lambda$, and therefore $U \setminus \Delta = B_7^\lambda$.

We know that the action of $\text{PGL}(3, \Fq)$ is free and transitive on $4$-arcs and commutes with $F_q$, therefore $|B_7^\lambda|$ must be divisible by $|\text{PGL}(3, \Fq)| = (q^2 + q + 1)(q^2 + q)(q^2)(q^2 - 2q + 1)$. 

\subsection{(1234)(56)} By Proposition \ref{P310} Fano planes of type $\lambda$ are generated by a unique $4$-arc of type $(1234)$. Knowing this, we set $U \subset C_7^\lambda$ defined by $S \in U$ if and only if for the point $a \in S$ such that $a \in D_q^4$ we have $O(a)$ forms a $4$-arc. We know there are precisely $(1/4)|\text{PGL}(3, \Fq)| = (1/4)(q^2 + q + 1)(q^2 + q)(q^2)(q^2 - 2q + 1)$ $4$-arcs of type $(1234)$.

For each $S \in U$, we will refer to $a$ as the $q^4$ point, $b$ as the $q^2$ point, and $c$ as the $q$-point, thus $S = O(a) \cup O(b) \cup \{c\}$.

We will characterize $\Delta \subset U$ by four subsets $\Delta_1$, $\Delta_2$, $\Delta_3$, and $\Delta_4$. Formally we say that $S \in \Delta_1$ if and only if $O(b) \cap \langle a, Fa \rangle \neq \emptyset$, $S \in \Delta_2$ if and only if $O(b) \cap \langle a, F^2a \rangle \neq \emptyset$, $S \in \Delta_3$ if and only if $c \in \langle a, F^2a \rangle$, and $S \in \Delta_4$ if and only if $c \in \langle b, Fb \rangle$. 

Intuitively $\Delta_1 \subset \Delta$ contains tuples where the $q^2$ point selected lies on a $q^4$-line drawn $O(a)$, $\Delta_2 \subset \Delta$ is all tuples where this point lies on the $q^2$ line drawn by $O(a)$. We have $\Delta_3 \subset \Delta$ as all tuples where the $q$-point lies on the $q^2$-line drawn by $O(a)$, and $\Delta_4 \subset \Delta$ is points where this point lies on the $q$-line drawn by $O(b)$.

\begin{lemma}\label{L401}
    $\Delta = \Delta_1 \cup \Delta_2 \cup \Delta_3 \cup \Delta_4$
\end{lemma}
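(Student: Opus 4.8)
We must show that the "bad" set $\Delta \subset U$ — tuples $S = O(a) \cup O(b) \cup \{c\} \in C_7^\lambda$ with $O(a)$ a $4$-arc but $S$ containing three collinear points — equals $\Delta_1 \cup \Delta_2 \cup \Delta_3 \cup \Delta_4$. One containment is immediate: each $\Delta_i$ is defined by a coincidence forcing three points of $S$ onto a line ($\langle a, Fa\rangle$, $\langle a, F^2 a\rangle$, or $\langle b, Fb\rangle$ already carries two points of $S$, and the defining condition throws a third onto it), so $\Delta_1 \cup \cdots \cup \Delta_4 \subseteq \Delta$. The content is the reverse inclusion.

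**Strategy for $\Delta \subseteq \Delta_1 \cup \cdots \cup \Delta_4$.** I would argue by exhausting the ways three points of $S$ can be collinear, using the $F$-action to normalize. Suppose $S \in \Delta$, so some triple $T \subset S$ is collinear; let $\ell = \langle T \rangle$, and note $F\ell$ again joins a collinear triple (the $F$-image of $T$), and $F^4\ell = \ell$ if $a\in T$ forces orbit structure, etc. Partition the casework by how $T$ meets the three orbits $O(a)$ (size $4$), $O(b)$ (size $2$), $\{c\}$ (size $1$):

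\emph{Two points of $O(a)$ plus a third.} A line through two points of $O(a)$ is either $\langle a, Fa\rangle$ (and its $F$-orbit, all $q^4$-lines) or $\langle a, F^2 a\rangle$ (and its $F$-orbit, a pair of $q^2$-lines). If the third point of $T$ is $c$ or a point of $O(b)$: when $\ell$ is a $q^4$-line, $c$ (a $q$-point) cannot lie on it, and if a point of $O(b)$ lies on $\ell = \langle a, Fa\rangle$ we are in $\Delta_1$ — one must check that a point of $O(b)$ on \emph{any} $q^4$-line through two points of $O(a)$ already forces, after applying a power of $F$, the point on $\langle a, Fa\rangle$ specifically, using that $F$ permutes these four lines cyclically and permutes $O(b) = \{b, Fb\}$. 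When $\ell$ is the $q^2$-line $\langle a, F^2 a\rangle$: if $c \in \ell$ we are in $\Delta_3$; if a point of $O(b)$ lies on $\ell$ we are in $\Delta_2$ (again possibly after applying $F$, which swaps $\langle a, F^2 a\rangle$ with $\langle Fa, F^3a\rangle$ and swaps the two points of $O(b)$, so the two sub-cases are $F$-conjugate). The remaining possibility in this family is three points of $O(a)$ collinear, but then $O(a)$ is not a $4$-arc, contradicting $S \in U$.

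\emph{One point of $O(a)$ plus the rest.} If $T = \{F^i a, b, Fb\}$ for some $i$: then $F^i a \in \langle b, Fb\rangle$, and applying $F^{-i}$ (which fixes the $q$-line $\langle b, Fb\rangle$ since it fixes $\{b,Fb\}$) puts $a$ on $\langle b, Fb\rangle$; but then all of $O(a)$ lies on the $q$-line $\langle b, Fb\rangle$, contradicting $O(a)$ a $4$-arc. If $T = \{F^i a, F^j b, c\}$ or $\{F^i a, c, \cdot\}$ with the third point outside these, the only option left is $\{F^i a, (\text{pt of }O(b)), c\}$, whose line $\ell$ has $F^? \ell$ also passing through an orbit-image; chasing the orbit of $\ell$ and counting incidences forces one of the earlier coincidences.

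\emph{No point of $O(a)$.} Then $T \subseteq O(b) \cup \{c\} = \{b, Fb, c\}$, so $T = \{b, Fb, c\}$ and $c \in \langle b, Fb\rangle$, which is exactly $\Delta_4$.

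**Main obstacle.** The delicate part is the normalization bookkeeping in the "two points of $O(a)$" case: making precise that \emph{every} collinearity of the form "point of $O(b)$ on a line through two points of $O(a)$" is carried by some power of $F$ into exactly the representative condition defining $\Delta_1$ or $\Delta_2$ — i.e. that $\Delta$ is genuinely a union of only \emph{these four} $F$-normalized conditions and not, say, a fifth type like $b \in \langle a, F^3 a\rangle$. This requires knowing the $F$-orbit structure of the lines $\langle a, F^i a\rangle$: the lines $\langle a, Fa\rangle, \langle Fa, F^2a\rangle, \langle F^2a, F^3a\rangle, \langle F^3a, a\rangle$ form one $F$-orbit of $q^4$-lines, while $\langle a, F^2 a\rangle, \langle Fa, F^3a\rangle$ form an $F$-orbit of $q^2$-lines, and these exhaust $\binom{4}{2}=6$ pairs; combined with $F$ swapping $b \leftrightarrow Fb$, a point of $O(b)$ on any of the six lines is $F$-equivalent to "$b \in \langle a, Fa\rangle$" or "$b \in \langle a, F^2 a\rangle$", i.e. to $\Delta_1$ or $\Delta_2$. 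Once that orbit analysis is written out cleanly, the rest is the routine exhaustion sketched above, and the lemma follows.
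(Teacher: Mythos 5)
Your proposal is correct and follows essentially the same route as the paper: enumerate the possible collinear triples by how they meet the orbits $O(a)$, $O(b)$, $\{c\}$, and use $F$-equivariance to normalize each incidence onto one of the four defining lines $\langle a,Fa\rangle$, $\langle a,F^2a\rangle$, $\langle b,Fb\rangle$. The one case you leave unexecuted, $T=\{F^ia, F^jb, c\}$, closes exactly as you indicate: $F^2$ fixes the line $\langle F^jb, c\rangle$, so that line picks up a second point $F^{i+2}a$ of $O(a)$ and you land back in the $\Delta_2$/$\Delta_3$ analysis, which is precisely what the paper does.
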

\begin{proof}
    It is clear that $\Delta_i \subset \Delta$ for all $i$ thus $\Delta \supset \Delta_1 \cup \Delta_2 \cup \Delta_3 \cup \Delta_4$.

    Now take $S \in \Delta$, since $S \in U$, then $O(a)$ forms a $4$-arc. If three points of $S$ are collinear, then either $O(b) \cup \{c\}$ are on a line, $O(a)$ is on a line formed by $O(b) \cup \{c\}$, or $O(b) \cup \{c\}$ is on a line formed by two points in $O(a)$.

    In the first case, $O(b) \cup \{c\}$ are on a line if and only if $c \in \langle b, Fb \rangle$ so $S \in \Delta_4$. In the second case, if $F^ja \in \ell = \langle b, c \rangle$, since $F^2\ell = \langle F^2b, c \rangle = \ell$ then $F^{j + 2}a \in \ell$ so $c \in \langle F^ja, F^{j + 2}a \rangle$ thus $c \langle a, F^2a \rangle \neq \emptyset$ and $S \in \Delta_3$. Note that $F^ja \in \ell = \langle b, Fb \rangle$ is impossible since $O(a)$ is a $4$-arc and does not intersect on a $q$-line. 

    Consider the third case. As before $\langle F^ia, F^{i+1}a \rangle$ cannot contain a $q$-point. If $b \in \langle F^ia, F^{i+1}a \rangle$ then $O(b) \cap \langle a, Fa \rangle \neq \emptyset$ so $S \in \Delta_1$. If $b \in \langle F^{j}a, F^{j+2}a\rangle$ then $O(b) \cap \langle a, F^2a \rangle \neq \emptyset$, and $S \in \Delta_2$. Lastly if $c \in \ell = \langle F^ja, F^{j+2}a \rangle$, but since $c \in F^j\ell$ for all $j$ then $c \in \langle a, F^2 a\rangle$, which is $\Delta_3$. This proves $\Delta_1 \cup \Delta_2 \cup \Delta_3 \cup \Delta_4 = \Delta$.
\end{proof}

Since $\Delta_i \subset U$, we always begin by arbitrarily picking a $4$-arc. Since this contributes the same factor to the count, we will simply count $8|\Delta_i| / |\text{PGL}(3, \Fq)|$ for ease of counting as explained in Section 2 and shown in Equation \ref{Conf1}. We then count the number of ways to pick $b \in D^2$ and $c \in D^1$ such that $S = O(a) \cup O(b) \cup \{c\} \in \Delta_i$ for each $i$.

$S \in \Delta_1$ if and only if $O(b) \cap \langle a, Fa \rangle \neq 0$. Notice that there can only exist at most $1$ $q^2$-point on the $q^4$-line $\langle a, Fa \rangle$ since $O(a)$ is a $4$-arc. Note also the point $\langle a, Fa \rangle \cap \langle F^2a, F^3a \rangle$ is a $q^2$-point, this must be $b$ or $Fb$. We can then pick any $q$-point in $D^1$. This gives:
\begin{align*}
    8|\Delta_1| / |\text{PGL}(3, \Fq)| = 2(q^2 + q + 1)
\end{align*}

$\langle a, F^2a \rangle$ is clearly a $q^2$ line, and contains $q^2$ many $q^2$-points. We can label this point $b$ or $Fb$, note that both cannot lie on this line, and then pick any $q$-point in $D^1$, so $O(b) \cap \langle a, F^2a \rangle \neq \emptyset$ and $S \in \Delta_2$.
\begin{align*}
    8|\Delta_2| / |\text{PGL}(3, \Fq)| = 2q^2(q^2 + q + 1)
\end{align*}

Notice that there exists at most $1$, $q$-point on any $q^2$-line and the point $\langle a, F^2a \rangle \cap \langle Fa, F^3a \rangle$ is a $q$-point. Picking that point and then pick any $q^2$ point gives $S \in \Delta_3$.
\begin{align*}
    8|\Delta_3| / |\text{PGL}(3, \Fq)| = q^4 - q
\end{align*}

We can pick any $q^2$-point from $D^2$ and then any $q$-point on the $q$-line $\langle b, Fb \rangle$, this gives $c \in \langle b, Fb \rangle$ so $S \in \Delta_4$.
\begin{align*}
    8|\Delta_4| / |\text{PGL}(3, \Fq)| = (q^4 - q)(q + 1)
\end{align*}

Clearly $a = \langle a, Fa \rangle \cap \langle a, F^2a \rangle$, so $O(b) \cap \langle a, Fa \rangle \cap \langle a, F^2a \rangle = \emptyset$ for all choices of $b$ and hence $|\Delta_1 \cap \Delta_2| = 0$

For $S \in \Delta_1 \cap \Delta_3$, the $q^2$-point must be $\langle a, Fa \rangle \cap \langle F^2a, F^3a \rangle$ labeled either $b$ or $Fb$, and the $q$-point must be $\langle a, F^2a \rangle \cap \langle Fa, F^3a \rangle$, thus:
\begin{align*}
    8|\Delta_1 \cap \Delta_3|/ |\text{PGL}(3, \Fq)| = 2
\end{align*}

For $S \in \Delta_1 \cap \Delta_4$, the $q^2$-point must be $\langle a, Fa \rangle \cap \langle F^2a, F^3a \rangle$ labeled either $b$ or $Fb$ and we pick one of the $q + 1$, $q$ points on $\langle b, Fb \rangle$.
\begin{align*}
    8|\Delta_1 \cap \Delta_4|/ |\text{PGL}(3, \Fq)| = 2(q + 1)
\end{align*}

For $S \in \Delta_2 \cap \Delta_3$, the $q$-point must be $\langle a, F^2a \rangle \cap \langle Fa, F^3a \rangle$ and we pick one of the $q^2$ many $q^2$ points on $\langle a, F^2a \rangle$, then labeling it either $b$ or $Fb$.
\begin{align*}
    8|\Delta_2 \cap \Delta_3|/ |\text{PGL}(3, \Fq)| = 2q^2
\end{align*}

For $S \in \Delta_2 \cap \Delta_4$ we pick a $q^2$-point on $\langle a, F^2a \rangle$, then labeling it either $b$ or $Fb$, then any $q$-point on $\langle b, Fb \rangle$.
\begin{align*}
    8|\Delta_2 \cap \Delta_4|/ |\text{PGL}(3, \Fq)| = 2q^2(q + 1)
\end{align*}

For $S \in \Delta_3 \cap \Delta_4$, we pick the $q$-point $\langle a, F^2a \rangle \cap \langle Fa, F^3a \rangle$ and then a $q^2$-point on any of the $q$-lines through this point. There are $q^2 - q$, $q^2$-points on each of the $q + 1$ different $q$-lines, note that both $b$ and $Fb$ are on this line.
\begin{align*}
    8|\Delta_3 \cap \Delta_4|/ |\text{PGL}(3, \Fq)| = (q + 1)(q^2 - q)
\end{align*}

Since $\Delta_1 \cap \Delta_2 = \emptyset$, the only non-empty intersections are $\Delta_1 \cap \Delta_3 \cap \Delta_4$ and $\Delta_1 \cap \Delta_3 \cap \Delta_4$. Each $S \in \Delta_1 \cap \Delta_3 \cap \Delta_4$ defines a Fano plane and $\Delta_1 \cap \Delta_3 \cap \Delta_4$ is empty thus giving:
\begin{align*}
    8|\Delta_1 \cap \Delta_3 \cap \Delta_4|/ |\text{PGL}(3, \Fq)| = 2\\
    8|\Delta_2 \cap \Delta_3 \cap \Delta_4|/ |\text{PGL}(3, \Fq)| = 0
\end{align*}

Through inclusion-exclusion, we are able to compute $|\Delta|/|\text{PGL}(3, \Fq)|$. We know that there are $|U| = (1/8)(q^2 + q + 1)(q^4 - q)|\text{PGL}(3, \Fq)|$ so we compute:
\begin{align}
    |B_7^\lambda| / |\text{PGL}(3, \Fq)| = \frac{1}{8} (q^6 - 4q^4)
\end{align}

Notice that if we set the number of Fano planes to $0$ we recover the count given for this cycle type given by Bergvall \cite[Table 1]{bergvall2020cohomology}. The computation for $7$-arcs in all other cycle types is very similar, thus the counts for the specific $\Delta_i$ will be omitted.

\subsection{(123)(456)}

Recall by Proposition \ref{P312} Fano planes of type $\lambda$ are generated by a unique $4$-arc of type $(123)$. We set $U \subset C_7^\lambda$ defined by $S \in U$ if and only if for the point $a \in S$ such that $a \in D_q^3$ we have $O(a) \cup \{c\}$ forms a $4$-arc.

For each $S \in U$, we will refer to $a,b$ as the $q^3$ points and $c$ as the $q$-point, thus $S = O(a) \cup O(b) \cup \{c\}$.

We characterize $\Delta \subset U$ by four subsets. We say $S \in \Delta_1$ if and only if $O(b) \cap \langle a, Fa \rangle \neq \emptyset$, $S \in \Delta_2$ if and only if $O(b) \cap \langle a, c \rangle \neq \emptyset$, $S \in \Delta_3$ if and only if $O(b)$ lies on a line, and $S \in \Delta_4$ if and only if $O(a) \cap \langle b, Fb \rangle \neq \emptyset$.

\begin{lemma}
    $\Delta = \Delta_1 \cup \Delta_2 \cup \Delta_3 \cup \Delta_4$
\end{lemma}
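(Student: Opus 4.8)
The plan is to prove the two inclusions separately; the inclusion $\Delta_1 \cup \Delta_2 \cup \Delta_3 \cup \Delta_4 \subseteq \Delta$ is immediate, while the reverse inclusion needs a short case analysis on how a collinear triple meets the two $q^3$-orbits $O(a)$ and $O(b)$. For the easy direction, I would note that in each $\Delta_i$ the defining condition literally exhibits three collinear points of $S = O(a) \cup O(b) \cup \{c\}$: a point of $O(b)$ together with $a, Fa$ in the case of $\Delta_1$; a point of $O(b)$ together with $a, c$ in the case of $\Delta_2$; the three points of $O(b)$ in the case of $\Delta_3$; and a point of $O(a)$ together with $b, Fb$ in the case of $\Delta_4$. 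In each case the three points are distinct because $O(a)$, $O(b)$, $\{c\}$ are pairwise disjoint (they lie in distinct $F$-orbits, and $c$ is a $q$-point while $a,b$ are $q^3$-points), so such an $S$ lies in $\Delta$.

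For the reverse inclusion, I would take $S \in \Delta$ and fix a collinear triple $T \subseteq S$. Since $S \in U$, the set $O(a) \cup \{c\}$ is a $4$-arc, so $T$ is not contained in $O(a) \cup \{c\}$, and hence $T$ meets $O(b)$. The technical tool used throughout is that any line spanned by two distinct points of an $F$-orbit of size $3$ is an $F$-translate of $\langle a, Fa \rangle$ (resp.\ $\langle b, Fb \rangle$), because $F$ cyclically permutes the three unordered pairs of such an orbit; and that $\langle F^j a, c \rangle = F^j \langle a, c \rangle$ since $F$ fixes the $q$-point $c$. So, after applying a suitable power of $F$ to the line carrying $T$, each configuration reduces to one of the four defining conditions.

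I would then split into cases by $|T \cap O(b)| \in \{1,2,3\}$. If $|T \cap O(b)| = 3$ then $O(b)$ is collinear, so $S \in \Delta_3$. If $|T \cap O(b)| = 2$, write $T = \{F^i b, F^j b, x\}$ with $x \in O(a) \cup \{c\}$: when $x \in O(a)$, the line $\langle F^i b, F^j b \rangle = F^m \langle b, Fb \rangle$ carries a point of $O(a)$, so $O(a) \cap \langle b, Fb \rangle \neq \emptyset$ and $S \in \Delta_4$; when $x = c$, translating by powers of $F$ puts $c$ on all three lines through pairs of $O(b)$, which ---  unless $O(b)$ is collinear, in which case $S \in \Delta_3$ --- forces $c$ to equal a point of $O(b)$, impossible on degree grounds. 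Finally if $|T \cap O(b)| = 1$, write $T = \{F^i b, x, y\}$ with distinct $x, y \in O(a) \cup \{c\}$: if $x, y \in O(a)$ the spanning line is $F^m \langle a, Fa \rangle$ and a point of $O(b)$ lies on it, giving $S \in \Delta_1$; if one of $x, y$ is $c$, say $T = \{F^i b, F^j a, c\}$, the spanning line is $F^j \langle a, c \rangle$ and $S \in \Delta_2$. This exhausts all possibilities, establishing $\Delta \subseteq \Delta_1 \cup \Delta_2 \cup \Delta_3 \cup \Delta_4$.

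The only subtle point --- and the step I expect to require the most care --- is the sub-case $|T \cap O(b)| = 2$ with the third point equal to $c$: it must be \emph{excluded} rather than placed in some $\Delta_i$, and doing so cleanly requires both the observation that $c$ then lies on all three lines joining pairs of $O(b)$ and the fact that $c$ is a $q$-point whereas the points of $O(b)$ are $q^3$-points. One must also remember to peel off the collinear-$O(b)$ case first, so that the ``three distinct concurrent lines'' argument is valid. Everything else is routine bookkeeping with the $F$-action.
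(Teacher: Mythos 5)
Your proof is correct and follows essentially the same route as the paper: the forward inclusion is immediate, and the reverse inclusion is a case analysis on how a collinear triple distributes between $O(b)$ and the $4$-arc $O(a)\cup\{c\}$, using the fact that $F$ cyclically permutes the pairs of each $q^3$-orbit and fixes $c$. Your treatment of the subtle sub-case $c\in\langle F^ib,F^jb\rangle$ (forcing $O(b)$ collinear, hence $\Delta_3$, or else $c$ would be a $q^3$-point) is exactly the paper's resolution, stated slightly more carefully.
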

\begin{proof}
    Clearly $\Delta_1 \cup \Delta_2 \cup \Delta_3 \cup \Delta_4 \subset \Delta$.

    Take any $S \in U$, if we have that three points of $S$ are collinear, then either $O(b)$ is on a line, $O(b)$ is on a line formed two points in $O(a) \cup \{c\}$, or $O(a) \cup \{c\}$ is on a line formed by two point in $O(b)$.

    If $S \in U$ is in the first case, then $S \in \Delta_3$. If the second case is true, either $b \in \langle F^ia, F^ja \rangle$ and thus $O(b) \cap \langle a, Fa \rangle \neq \emptyset$ so $S \in \Delta_1$, or $b \in \langle F^ia, c \rangle$ and $S \in \Delta_2$. If the last case is true, either $a \in \langle F^ib, F^jb \rangle$ so $O(a) \cap \langle b, Fb \rangle \neq \emptyset$ and $S \in \Delta_4$, or $c \in \langle F^ib, F^jb \rangle$ this forces $\{b, Fb, F^2b\}$ collinear so $S \in \Delta_3$. This proves $\Delta = \Delta_1 \cup \Delta_2 \cup \Delta_3 \cup \Delta_4$.
\end{proof}

Following a similar line of computation as before we compute $|\Delta|/|\text{PGL}(3, \Fq)| = (1/18)(q^5 + q^4 + 9q^3 - 10q^2 - 7q - 21)$ and since $|U|/|\text{PGL}(3, \Fq)| = (1/18)(q^6 + q^3 - q^2 - q)$ we get:
\begin{align}
    |B_7^\lambda|/|\text{PGL}(3, \Fq)| = \frac{1}{18}(q^6 - q^5 - q^4 - 8q^3 + 9q^2 + 6q + 12)
\end{align}

\subsection{(12)(34)}
As before we define $U \subset C_7^\lambda$ by $S \in U$ if $O(a) \cup O(b)$ forms a $4$-arc, and we refer to each $S \in U$ by $S = O(a) \cup O(b) \cup \{p_1, p_2, p_3\}$ for $a, b \in D^2$ distinct orbits, and $p_i \in D^1$ for all $i$ all distinct. This is because Proposition \ref{P311} allows us to state each $S \in U$ generates precisely one Fano plane.

We characterize $\Delta$ by five subsets. We say $S \in \Delta_1$ if and only if $p_i \in \langle a, b \rangle \cap \langle Fa, Fb \rangle$ for some $i$, $S \in \Delta_2$ if and only if $p_i \in \langle a, Fb \rangle \cap \langle Fa, b \rangle$ for some $i$, $S \in \Delta_3$ if and only if $p_i \in \langle a, Fa \rangle$ for some $i$, $S \in \Delta_4$ if and only if $p_i \in \langle b, Fb \rangle$ for some $i$, and $S \in \Delta_5$ if and only if $p_1, p_2, p_3$ are collinear. We have a similar Lemma as to before:

\begin{lemma}
    $\Delta = \Delta_1 \cup \Delta_2 \cup \Delta_3 \cup \Delta_4 \cup \Delta_5$
\end{lemma}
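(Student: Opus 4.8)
The plan is to follow the template of the two preceding lemmas for $(1234)(56)$ and $(123)(456)$. The inclusion $\Delta_1\cup\Delta_2\cup\Delta_3\cup\Delta_4\cup\Delta_5\subseteq\Delta$ is immediate: in each of the first four cases the defining condition places some $p_i$ on a line already containing two points of $O(a)\cup O(b)$, so $S$ acquires a collinear triple, and in the fifth case $p_1,p_2,p_3$ themselves form that triple; hence every $\Delta_i\subseteq\Delta$ and so does their union. The content is the reverse inclusion, which I would establish by classifying a collinear triple.

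So take $S\in\Delta$ and fix a collinear triple $T\subseteq S$. Since $S\in U$, the set $O(a)\cup O(b)$ is a $4$-arc, so $T$ is not contained in it; thus $T$ meets $\{p_1,p_2,p_3\}$, and I split on $k:=|T\cap\{p_1,p_2,p_3\}|\in\{1,2,3\}$. If $k=3$ then $p_1,p_2,p_3$ are collinear and $S\in\Delta_5$. If $k=2$, write $T=\{p_i,p_j,x\}$ with $x\in O(a)\cup O(b)$; the line $\langle p_i,p_j\rangle$ is spanned by two $q$-points, hence $F$-invariant, so $Fx$ lies on it as well, forcing both points of $O(a)$ (if $x\in O(a)$) or both points of $O(b)$ (if $x\in O(b)$) onto $\langle p_i,p_j\rangle$; consequently $p_i\in\langle a,Fa\rangle$ or $p_i\in\langle b,Fb\rangle$, i.e. $S\in\Delta_3\cup\Delta_4$. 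If $k=1$, write $T=\{p_i,x,y\}$ with $x,y$ distinct points of $O(a)\cup O(b)$, and run through the six possibilities for $\{x,y\}$: the pairs $\{a,Fa\}$ and $\{b,Fb\}$ give $\Delta_3$ and $\Delta_4$ outright; for $\{a,b\}$ or $\{Fa,Fb\}$ one has $p_i\in\langle a,b\rangle$, and applying $F$ (which fixes $p_i$ and stabilizes $O(a)$ and $O(b)$ since $F^2a=a$ and $F^2b=b$) gives $p_i\in\langle Fa,Fb\rangle$ as well, so $S\in\Delta_1$; for $\{a,Fb\}$ or $\{Fa,b\}$ the same step yields $p_i\in\langle a,Fb\rangle\cap\langle Fa,b\rangle$, so $S\in\Delta_2$. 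This exhausts the cases and proves $\Delta\subseteq\Delta_1\cup\Delta_2\cup\Delta_3\cup\Delta_4\cup\Delta_5$.

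The only delicate point, and the one I would take care to spell out, is the $F$-invariance step: a line through two distinct $q$-points is fixed by $F$, and applying $F$ to a collinear triple consisting of a $q$-point and two $q^2$-points upgrades an incidence on one line to an incidence on its $F$-image, which is exactly the shape of the intersection conditions defining $\Delta_1$ and $\Delta_2$. Beyond that the argument is pure bookkeeping over the finitely many shapes a collinear triple can take, constrained by the $4$-arc hypothesis $S\in U$; I expect no genuine obstacle, the only real risk being an overlooked case, which the split on $k$ together with the explicit enumeration of $\{x,y\}$ is designed to preclude.
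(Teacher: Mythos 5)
Your proof is correct and follows essentially the same route as the paper: the forward inclusion is immediate, and the reverse inclusion is a case analysis on how the collinear triple meets $\{p_1,p_2,p_3\}$, using the $F$-invariance of the $q$-line $\langle p_i,p_j\rangle$ to reduce the two-$q$-point case to $\Delta_3\cup\Delta_4$ and applying $F$ to upgrade a single incidence $p_i\in\langle a,b\rangle$ (resp.\ $\langle a,Fb\rangle$) to the intersection conditions defining $\Delta_1$ (resp.\ $\Delta_2$). Your write-up is somewhat more explicit than the paper's in enumerating the pairs $\{x,y\}$, but the argument is the same.
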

\begin{proof}
    Clearly $\Delta_1 \cup \Delta_2 \cup \Delta_3 \cup \Delta_4 \cup \Delta_5 \subset \Delta$.

    Take any $S \in U$, if we have three points collinear, we have that either $p_1, p_2, p_3$ are collinear, a point from $O(a) \cup O(b)$ lies on a line formed by two of $p_1, p_2, p_3$, or $p_i$ lies on a line formed by $O(a) \cup O(b)$.

    In the first case if $p_1, p_2, p_3$ are collinear then $S \in \Delta_5$. The second case is redundant since if a point in $O(a) \cup O(b)$ lies on $\ell = \langle p_i, p_j \rangle$, notice $F\ell = \ell$, so either $O(b)$ or $O(a)$ lies on $\langle p_i, p_j \rangle$, so $p_i$ lies on a line formed by $O(a) \cup O(b)$.

    In the third case, either $p_i \in \langle a, b \rangle$ so $p_i \in \langle Fa, Fb \rangle$ and $S \in \Delta_1$, similarly if $p_i \in \langle Fa, b \rangle$ then $S \in \Delta_2$. If $p_i \in \langle a, Fa \rangle$ we have $S \in \Delta_3$ and lastly if $p_i \in \langle b, Fb \rangle$ then $S \in \Delta_4$. This proves $\Delta = \Delta_1 \cup \Delta_2 \cup \Delta_3 \cup \Delta_4 \cup \Delta_5$.
\end{proof}

As before we compute $|\Delta|/|\text{PGL}(3, \Fq)| = (1/48)(7q^5 + 4q^4 - 15q^3 + 5q^2 + 11q -6)$ and since $|U|/|\text{PGL}(3, \Fq)| = (1/48)(q^2 + q + 1)(q^2 + q)(q^2 + q - 1)$ we get:
\begin{align}
    |B_7^\lambda|/|\text{PGL}(3, \Fq)| = \frac{1}{48}(q^6 - 4q^5 - q^4 + 16q^3 - 6q^2 - 12q)
\end{align}

\subsection{(1234567)}

We refer to each $S \in C_7^\lambda$ by $S = O(a)$ for $a \in D^7$. As mentioned in Section 3, we do not have a nice characterization for Fano planes of cycle type $\lambda$. Thus we let $U = C_7^\lambda$, and we split up $\Delta \subset U$ in two subsets where $S \in \Delta_1$ if and only if $O(a)$ lies on a $q$-line and $S \in \Delta_2$ if and only if $O(a)$ forms a Fano plane.

\begin{lemma}
    $\Delta = \Delta_1 \cup \Delta_2$
\end{lemma}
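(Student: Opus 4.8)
The plan is to prove the two inclusions separately; the reverse inclusion rests entirely on Proposition \ref{P313} together with the trichotomy of Bergvall that underlies it, so there is very little genuinely new to do.

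The inclusion $\Delta_1 \cup \Delta_2 \subseteq \Delta$ is immediate from the definitions. If $S = O(a) \in \Delta_1$, then all seven points of $O(a)$ lie on a single $q$-line, so in particular some three of them are collinear and $S \in \Delta$. If $S = O(a) \in \Delta_2$, then $O(a)$ forms a Fano plane, which by definition contains collinear triples, so again $S \in \Delta$. Hence $\Delta_1 \cup \Delta_2 \subseteq \Delta$.

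For the reverse inclusion I would take $S = O(a) \in \Delta$ with $a \in D^7$, so that some three points of $O(a)$ are collinear; in particular $O(a)$ is not a $7$-arc. Recall the trichotomy used in the proof of Proposition \ref{P313}, namely Bergvall \cite[Lemma 6.6]{bergvall2020equivariant}: for any $a \in D^7$, either $O(a)$ forms a $7$-arc, or $O(a)$ lies on a $q$-line, or one of $\{a, Fa, F^5a\}$, $\{a, Fa, F^3a\}$ is collinear. Since $S \in \Delta$, the first alternative is excluded, so one of the other two holds. If $O(a)$ lies on a $q$-line, then $S \in \Delta_1$ by definition. Otherwise one of $\{a, Fa, F^5a\}$, $\{a, Fa, F^3a\}$ is collinear, and Proposition \ref{P313} then asserts precisely that $O(a)$ forms a Fano plane, so $S \in \Delta_2$. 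This gives $\Delta \subseteq \Delta_1 \cup \Delta_2$, completing the proof.

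I do not expect a real obstacle here: all of the substance was already handled in Section 3, and this lemma is essentially a repackaging of Proposition \ref{P313} and Bergvall's trichotomy into the language of the sets $\Delta_1, \Delta_2$. The only point requiring a moment's care is that the trichotomy is \emph{exhaustive}, which is exactly what lets us conclude that a collinear triple in $O(a)$ forces $O(a)$ to be either $q$-linear or a Fano plane. For the inclusion–exclusion computation of $|\Delta|$ that follows, it is also worth recording (though it is not needed for this lemma) that $\Delta_1 \cap \Delta_2 = \emptyset$: seven points all lying on one line cannot form a Fano plane, whose lines carry exactly three points each.
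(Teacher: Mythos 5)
Your proof is correct and follows essentially the same route as the paper: the forward inclusion is trivial, and the reverse inclusion reduces to the fact that a collinear triple in $O(a)$ forces $O(a)$ either onto a $q$-line or into one of the two configurations that Proposition \ref{P313} identifies as Fano planes. The only cosmetic difference is that you cite Bergvall's trichotomy directly, whereas the paper re-derives it by enumerating the five cyclic classes of triples $\{F^ia,F^ja,F^ka\}$ and checking which ones force an $F$-invariant line; your closing observation that $\Delta_1\cap\Delta_2=\emptyset$ is also correct and is implicitly used in the paper's subsequent count.
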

\begin{proof}
    $\Delta \supset \Delta_1 \cup \Delta_2$ is trivial. Take any $S \in U$, and suppose three points of $O(a)$ are collinear, we have five cases. If $\{a, Fa, F^2a\} \subset \ell$ then $\ell = F\ell$ so $O(a) \subset \ell$ a $q$-line. If $\{a, Fa, F^4a\} \subset \ell$ then $F^3\ell = \ell$ so $\ell$ is a $q$-line and $O(a) \subset \ell$. If $\{a, F^2a, F^4a\} \subset \ell$ then $F^5\ell = \ell$ so $O(a) \subset \ell$ a $q$-line. In all cases mentioned $\ell \in \Delta_1$.

    In the last two cases either $\{a, Fa, F^3a\}$ or $\{a, Fa, F^5a\}$ are collinear and from Proposition \ref{P313} we know that this implies that $S$ is a Fano plane so $S \in \Delta_2$. This proves $\Delta = \Delta_1 \cup \Delta_2$ as desired.
\end{proof}

We know that $\Delta_1$ can be calculated trivially, we pick a $q$-line then a $q^7$-point on this $q$-line. There are $q^2 + q + 1$ total $q$-lines and $q^7 - q$ many $q^7$ points on each such line, thus $|\Delta_1| = (1/7)(q^2 + q + 1)(q^7 - q)$.

Notice that all the lines that make up the Fano plane of type $\lambda$ are all $q^7$-lines. We cannot apply the method of counting previously to this case, since we cannot characterize a point that forms a Fano plane using the tools we have been using so far.

We know that the number of Fano planes in any given field must be represented by $q^d + O(q^{d - \epsilon})$ once dividing by $|PGL(3, \Fq)|$ where $d \ge 0$. Notice that in the trivial case, we know that the number of Fano planes is precisely $30$ after also dividing by $|PGL(3, \Fq)|$. If we suppose $d > 0$, then we could pick a field large enough such that $q^d + O(q^{d - \epsilon}) > 30$ and there are more Fano planes of type $\lambda$ over $\Fq$ than Fano planes of the trivial cycle type over $\mathbb{F}_{q^7}$. This is clearly a contradiction. Thus $d = 0$, and the number of Fano planes of type $\lambda$ has to be constant once we divide by $|PGL(3, \Fq)|$.

This tells us that we can compute this constant using $\mathbb{F}_2$ as the base field and this is sufficient for computing the number of $7$-arcs of type $\lambda$ in all fields $\Fq$ of characteristic $2$. The code used for this computation is deferred to the appendix and is written in the computational software SageMath \cite{sagemath}. 

The algorithm implemented is as follows: we take each $a \in D^7$ and consider the set $O(a)$, we know that from Proposition \ref{P313} that $O(a)$ forms a Fano plane if only if either $\{a, Fa, F^3\}$ or $\{a, Fa, F^5\}$ are collinear, if both are collinear then $O(a)$ lies on a $q$-line. We count the number of $a$ where the condition for a Fano plane holds for $O(a)$ and divide by $|PGL(3, \mathbb{F}_{2})|$.

We computed the constant to be $2$, thus $|\Delta_2| = (2/7)|\text{PGL}(3,\Fq)|$ and we get:
\begin{align}
    |B_7^\lambda|/|\text{PGL}(3,\Fq)| = \frac{1}{7}(q^6 + q^4 + q^3 + q^2 - 1)
\end{align}

This proves Theorem \ref{1.21}.

\section*{Acknowledgements}

I am grateful to Ronno Das for suggesting the problem, helpful discussions, guidance in completing the project. I am thankful for the Summer Undergraduate Research Fellowships program at Caltech for the support. I would like to acknowledge Doru Nicola for helpful comments throughout the project.

\bibliographystyle{amsalpha}
\bibliography{references}

\section*{Appendix}

The code used to compute how many Fano planes of cycle type $(1234567)$ exist for $\mathbb{F}_2$ is listed below.

\begin{lstlisting}[style=Python]
F = 2
frob = F
P_7 = create_ProjectivePlane(GF((F, 7),x))
Fano = 0

def create_ProjectivePlane(k):
    P = [(0, 0, 1)]
    for a in k:
        P.append((0, 1, a))
        for b in k:
            P.append((1, a, b))
    return P

def check_Point_Equal(p_1, p_2):
    return p_1[0] == p_2[0] and p_1[1] == p_2[1] and p_1[2] == p_2[2]

def frob_Point(p, i):
    power = frob ** i
    return (p[0] ** power, p[1] ** power, p[2] ** power)

def find_Line(p_1, p_2):
    if check_Point_Equal(p_1, p_2):
        return False
    elif p_1[0] != 0:
        if p_2[0] != 0:
            if (p_2[1] - p_1[1]) == 0:
                return (-1 * p_1[1], 1, 0)
            else:
                ap = (p_2[2] - p_1[2]) / (p_2[1] - p_1[1])
                return (p_1[1] * ap - p_1[2], -1 * ap, 1)
        else:
            return ((p_1[1] * p_2[2]) - p_1[2], -1 * p_2[2], 1)
    elif p_2[0] != 0:
        return ((p_2[1] * p_1[2]) - p_2[2], -1 * p_1[2], 1)
    else:
        return (1, 0, 0)

for vector in P_7:
    if not check_Point_Equal(vector, frob_Point(vector, 1)):
        l_1 = find_Line(vector, frob_Point(vector, 1))
        l_3 = find_Line(vector, frob_Point(vector, 3))
        l_5 = find_Line(vector, frob_Point(vector, 5))

        if not check_Point_Equal(l_1, l_3) or not check_Point_Equal(l_1, l_5):
            if check_Point_Equal(l_1, l_3) or check_Point_Equal(l_1, l_5):
                Fano += 1

print(Fano)
\end{lstlisting}

\end{document}